\numberwithin{equation}{section}
\theoremstyle{plain}
\newtheorem{theorem}{Theorem}[section]
\newtheorem{Def}[theorem]{Definition}
\newtheorem{lemma}[theorem]{Lemma}
\newtheorem{corollary}[theorem]{Corollary}
\newtheorem{proposition}[theorem]{Proposition}
\theoremstyle{definition}
\newtheorem{definition}[theorem]{Definition}
\theoremstyle{remark}
\newtheorem{case[theorem]}{Case}
\def\supp{\hbox{supp\,}}
\def\norm#1.#2.{\lVert#1\rVert_{#2}}
\title{Tight Wavelet Frame Sets in Finite  Vector Spaces}
\author{Alex Iosevich,
Chun-Kit Lai, Azita Mayeli}
\begin{document}
\maketitle

\begin{abstract} Let $q\geq 2$ be an integer,  and $\Bbb F_q^d$, $d\geq 1$,  be the vector space over the cyclic space   $\Bbb F_q$. The purpose of this paper is two-fold. First, we obtain sufficient conditions on $E \subset \Bbb F_q^d$ such that the inverse Fourier transform of $1_E$ generates  a tight wavelet frame  in $L^2(\Bbb F_q^d)$. We call these sets (tight) wavelet frame sets. The conditions are given in terms of multiplicative and translational tilings, which is analogous with Theorem 1.1 (\cite{Wang}) by Wang in the setting of finite fields. In the second part of the paper, we exhibit a constructive method for obtaining  tight wavelet frame sets in $\Bbb F_q^d$, $d\geq 2$, $q$ an odd prime and $q\equiv 3$ (mod 4). \end{abstract}

{\it Key words and phrases.} Prime fields; finite vector space; wavelet frames; wavelet frame sets; tight frames; translational tiling; multiplicative tiling; rotational tiling; spectral set; spectrum; spectral pair 

\section{Introduction}
A countable subset $\{x_k\}_{k\in I}$ of a Hilbert space ${\mathcal H}$ is said to be a {\it frame}    if there exists two positive  constants $A\leq B$ such that for any $x\in \mathcal H$
$$
A\|x\|_{\mathcal H}^2 \leq \sum_{k\in I} |\langle x, x_k\rangle_{\mathcal H}|^2\leq B \|x\|_{\mathcal H}^2 .
$$
The positive constants $A$ and $B$ are called lower and upper frame bounds, respectively. The frame is called a {\it tight frame} when we can take $A=B$ and it is called a {\it Parseval frame} if $A=B=1$. One of most significant features of the frames that makes them practical and useful is their  redundancy which has an important role, for example,  in robustness. The frames also allow a localized representation of elements in the Hilbert space and they have   been used for a number of years by engineers and applied mathematicians for purposes of signal processing and data compression.  The notion of frames  was first introduced by Duffin and Schaeffer \cite{Duffin-Schaeffer}.   Amongst the frames,   tight frames play a fundamental role in the applications of frames due to their numerical stability. In this paper we aim to construct tight frames on the  finite vector spaces over the finite fields which arise from dilation and translation of a function whose Fourier transform is characteristic function of a non-empty set.

\medskip

In the classical setting, a function $\psi\in L^2(\Bbb R^d)$ is said to generate a {\it orthonormal wavelet basis} (resp. {\it wavelet frame}) if there is a set of $d\times d$ matrices $\mathcal D\subset GL(d,\Bbb R)$ and a subset $T\subset \Bbb R^d$ such that the family
 \begin{align}\label{wavelet system}
\{|\det(D)|^{1/2} \psi(Dx -t):  \ D\in\mathcal D, t\in T\},
 \end{align}
forms an orthonormal basis  (resp. frame)  for $L^2(\Bbb R^d)$. Then we say  $\psi$ is an  {\it orthonormal wavelet (resp. frame wavelet)}   and every element $|\det(D)|^{1/2} \psi(Dx-t)$ is a  dilation and translation copy of $\psi$ with respect to the invertible matrix $D$ and translation $t$, respectively.
The structures of $\mathcal D $ and $T$ and associated to which there exists  orthonormal and frame wavelets for $L^2(\Bbb R^d)$  have been studied by many authors, for example \cite{Han-Larson, Larson-Speegle-Dai, Fang-Wang,Wang}. See also \cite{ILMP16} for an alternative perspective on wavelets in vector spaces over finite fields and connections with combinatorial problems. 

\medskip

Let $q$ be an odd prime  and ${\mathbb F}_q$ be the prime field with $q$ elements. Then ${\Bbb F}_q^d$ is the vector space of dimension $d$ over the finite field ${\Bbb F}_q$. In analogue to ${\mathbb R}^d$, the purpose of this paper is to study tight wavelet frames  on  $\mathbb F_q^d$ and its subspace for  $d\geq 1$. Let Aut$({\Bbb F}_q^d)$ be the set of all automorphisms on ${\mathbb F}_q^d$.
Following the same spirit of the frame wavelets on $\Bbb R^d$,  we say that a function $\psi: \ \mathbb F_q^d \to \Bbb C$ is a {\it wavelet} or a {\it  frame wavelet} for $L^2({\mathbb F}_q^d)$ if there exists a set of automorphisms ${\mathcal A}\subset {\text Aut}({\Bbb F}_q^d)$ and a subset ${\Lambda}\subset\mathbb F_q^d$ such that
the family
 \begin{align}\label{discrete wavelet system}
 \{ \psi(a x - \lambda): \ a\in \mathcal A,  \ \lambda\in \Lambda \}.
 \end{align}
is an orthonormal basis (resp. frame) for $L^2({\mathbb F}_q^d)$.  Note that in the continuous case, for the matrix $D$, the factor $\Delta_D:=|\det(D)|^{1/2}$ makes the dilation map 
$$f\to |\det(D)|^{1/2} f(Dx)$$ an isometry. In the discrete  case $\Bbb F_q^d$ the Haar measure on $\Bbb F_q^d$ is discrete and invariant under the dilation, therefore the  dilation factor $\Delta_a$ is $1$.

\medskip

A common way to construct a wavelet frame on ${\mathbb R}^d$ is to choose a  function whose Fourier transform is the indicator   of a measurable set, and then  consider the system (\ref{wavelet system}) of dilations and translations of the function.  This leads to the traditional definition of  frame wavelet sets as follows:
A set $\Omega\subset \Bbb R^d$ is called a {\it frame wavelet set} with respect to   $\mathcal D$  and $T$ if for the function $\psi$ with $\hat\psi= 1_\Omega$, the system  (\ref{wavelet system}) is a frame  in $L^2(\Bbb R^d)$.  If the system is an orthonormal basis for $L^2(\Bbb R^d)$,  then the  function $\psi$ is called minimally supported frequency wavelet  (MSF wavelet) and $\Omega$ is called a {\it wavelet set}.  The wavelet sets and minimally supported frequency wavelets   were introduced in \cite{Fang-Wang} and   studied exclusively,  for example, in  \cite{HWW1, HWW2} and by many other authors. The existence of wavelet sets  in $\Bbb R^d$ for any expansive matrix  was given in \cite{Larson-Speegle-Dai}.  The wavelet theory and wavelet sets are  studied in a constructive way in locally compact abelian groups with compact open subgroups, such as $p$-adic groups $\Bbb Q_p$,  in \cite{Benedetto-Benedetto04,Benedetto-Benedetto11}

\medskip

 A well-known example of a wavelet set is the {\it Shannon set} given by
 $$\Omega= [-2\pi, -\pi] \cup [\pi, 2\pi). $$
 The orthonormal wavelet for $\Omega$ is then given by $\psi(x)= 2\ \text{sinc}(2x-1) - \text {sinc}(x)$ with $\widehat{\psi} = 1_{\Omega}$, where $1_\Omega$ is the indicator   of  set $\Omega$ .  For more examples and constructions of wavelet sets in $\Bbb R^d$ we invite the reader to see \cite{Baggett99,Benedetto-Benedetto11,Benedetto-Leon99,Merrill08,Merrill12}.

 \medskip

 In \cite{Wang}, Wang tied the existence of wavelet sets with the notion of spectral sets. We say that a measurable set of positive measure $\Omega$ is a {\it spectral set} if there exists a countable set $\Gamma$ such that the collection of exponentials $\{e^{2\pi i \gamma\cdot x}: \gamma\in \Gamma\}$ forms an orthonormal basis for $L^2(\Omega)$.  In this case we  say that $(\Omega, \Gamma)$ forms a {\it spectral pair}. Spectral sets were first introduced by Fuglede \cite{F} and he proposed an infamous conjecture asserting that spectral sets are exactly translational tiles on ${\mathbb R}^d$. However, this conjecture was proven to be false in its full generality by Tao \cite{T}. Nowadays, the exact relationship between spectral sets and translational tiles are mysterious. We   refer to \cite{DL, IM_FA} for some recent progress.

  Wang proved the following result in the classical setting which   characterizes the wavelet sets  by  multiplicative and translational tiling. Let $D^T$ denote the transpose of matrix $D$, and  $\check g$  denote the  inverse Fourier transform of $g$.
 \medskip
    \begin{theorem}[Theorem 1.1, \cite{Wang}]\label{Wang}
    Let $\mathcal D\subset GL(d,\Bbb R)$ and  $\Gamma\subset \Bbb R^d$. Let $\Omega\subset \Bbb R^d$ with positive and finite Lebesgue measure.  If $\{D^t(\Omega): \ D\in \mathcal D\}$ is a tiling of $\Bbb R^d$ and $(\Omega, \Gamma)$ is a spectral pair, then $\psi= \check{1_\Omega}$ is a wavelet with respect to the dilation set $\mathcal D$ and  the translation set $\Gamma$.  Conversely, if $\psi= \check{1_\Omega}$ is a wavelet with respect to $\mathcal D$ and $\Gamma$ and $0\in \Gamma$, then $\{D^T(\Omega): D\in \mathcal D\}$ is a tiling of $\Bbb R^d$ and $(\Omega, \Gamma)$ is a spectral pair.
    \end{theorem}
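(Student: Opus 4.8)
The plan is to pass to the Fourier side and reduce everything to a clean statement about modulated indicator functions supported on the dilates $D^T(\Omega)$. First I would record the basic computation: writing $\psi_{D,t}(x) = |\det D|^{1/2}\psi(Dx - t)$ for the generic element of the wavelet system and using $\widehat{\psi} = 1_\Omega$, a change of variables gives
\begin{equation*}
\widehat{\psi_{D,t}}(\xi) = |\det D|^{-1/2}\, e^{-2\pi i \langle t,\, (D^T)^{-1}\xi\rangle}\, 1_{D^T(\Omega)}(\xi).
\end{equation*}
Thus $\widehat{\psi_{D,t}}$ is supported exactly on $D^T(\Omega)$, and the substitution $\eta = (D^T)^{-1}\xi$ defines a unitary map $U_D : L^2(\Omega)\to L^2(D^T\Omega)$ whose Jacobian cancels the determinant factor and which carries the exponential $e^{-2\pi i\langle t,\eta\rangle}$ to $\widehat{\psi_{D,t}}$. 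This single computation is the engine of both implications: the dilation structure of the system turns into the multiplicative tiling $\{D^T\Omega\}$, while the translation structure turns into the exponential system $\{e^{2\pi i\langle t,\eta\rangle}: t\in\Gamma\}$ on $L^2(\Omega)$.

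For the forward direction I would use the tiling hypothesis to write the orthogonal decomposition $L^2(\mathbb{R}^d) = \bigoplus_{D\in\mathcal D} L^2(D^T\Omega)$, the summands being orthogonal because the supports overlap in measure zero and exhaust $\mathbb{R}^d$. Since $(\Omega,\Gamma)$ is a spectral pair the exponentials form an orthonormal basis of $L^2(\Omega)$ (hence so does the conjugate family), and transporting through the unitary $U_D$ shows that $\{\widehat{\psi_{D,t}}: t\in\Gamma\}$ is an orthonormal basis of each summand $L^2(D^T\Omega)$. Assembling over $D$ and applying Plancherel yields that $\{\psi_{D,t}\}$ is an orthonormal basis of $L^2(\mathbb{R}^d)$, i.e.\ $\psi$ is a wavelet.

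The converse is where the real work lies, since one must \emph{disentangle} the tiling condition from the spectrality condition out of the single orthonormal-basis hypothesis. Orthonormality of the family restricted to a fixed $D$ transplants to orthonormality of $\{e^{2\pi i\langle t,\eta\rangle}: t\in\Gamma\}$ in $L^2(\Omega)$ (and forces $|\Omega| = 1$). The hypothesis $0\in\Gamma$ now enters decisively: orthogonality of $\widehat{\psi_{D,0}}$ and $\widehat{\psi_{D',0}}$ for $D\neq D'$ reads $|D^T\Omega\cap (D')^T\Omega| = 0$, which is precisely the packing inequality $\sum_D 1_{D^T\Omega}\leq 1$ a.e. For the complementary covering inequality I would feed an arbitrary $f$ into the Parseval identity of the orthonormal basis and bound the inner sum over $t$ by Bessel's inequality for the already-orthonormal exponentials, obtaining $\|\widehat f\|^2\leq \int |\widehat f|^2 \sum_D 1_{D^T\Omega}$ and hence $\sum_D 1_{D^T\Omega}\geq 1$ a.e. Together these give the tiling. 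With the tiling in hand every preceding inequality collapses to equality, so Bessel upgrades to Parseval for the exponentials and they become not merely orthonormal but complete in $L^2(\Omega)$, establishing the spectral pair.

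The main obstacle, as indicated, is this converse bookkeeping: Bessel alone yields only one of the two tiling inequalities, and the reverse inequality cannot come from the spectral side (not yet known to be complete) but must be extracted from cross-orthogonality at $t=0$, which is exactly why the normalization $0\in\Gamma$ is essential. A secondary point to handle with care is the measure-theoretic bookkeeping—both the tiling and the basis statements hold only up to null sets—together with checking that $g_D := |\det D|^{1/2}\widehat f(D^T\cdot)\,1_\Omega$ ranges over all of $L^2(\Omega)$ as $f$ varies, which is what converts the equality case of Bessel into genuine completeness of the exponential system.
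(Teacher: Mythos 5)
Your proof is correct, but note that the paper itself does not prove this statement: it is quoted verbatim from Wang (Theorem 1.1 of \cite{Wang}) as motivation, so there is no in-paper proof to compare against line by line. Your argument is the standard one and, reassuringly, it is exactly the strategy the paper deploys for its finite-field analogues: the forward direction (unitary transport of the exponential basis onto each tile $D^T\Omega$, then orthogonal direct-sum assembly over the multiplicative tiling) mirrors Theorems \ref{P.F} and \ref{PWF}, the covering half of your converse (testing Parseval against an $f$ with $\widehat f$ supported off the union) is the same contradiction argument as in the converse of Theorem \ref{PWF}, and your use of $0\in\Gamma$ to extract pairwise disjointness from cross-orthogonality is precisely the mechanism of Theorem \ref{orthogonal system}. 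Your closing observation --- that Bessel plus the now-established tiling forces equality, and that $g_D$ sweeps out all of $L^2(\Omega)$ so the exponentials are complete --- is the one step with no finite-field counterpart in the paper (there it is automatic by dimension counting), and you handle it correctly.
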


    \medskip
Inspired by the result of the theorem,  it is natural for us to ask for what degree  one can     extend the notion  and concept of wavelet sets and multiplicative tiling  in  $\Bbb F_q^d$.   In what follows, we shall study this. For the multiplicative tiling purpose, we have to remove the origin and let  $Y:=\Bbb F_q^d\setminus \{0\}$.  

\medskip

\begin{Def}[Multiplicative and translational  tiling]
 {\rm Let $E$ be a subset of ${\Bbb F}_q^d$. We say   $E$ is a {\it multiplicative tiling set}  for  ${\Bbb F}_q^d$   if there is a set of automorphisms ${\mathcal A}$ in Aut$({\Bbb F}_q^d)$ such that $E$ tiles
$Y$
multiplicatively by   ${\mathcal A}$, i.e.,
$$
Y= \bigcup_{\alpha\in{\mathcal A}} \alpha(E) \ \mbox{(disjoint union)}.
$$
As a result, a multiplicative tiling set    does  not include the origin $\vec 0$. This is a natural requirement since $\alpha(\vec{0}) = \vec{0}$ for any automorphism $\alpha$.

\medskip

{\rm We say that a subset $F\subseteq {\Bbb F}_q^d$ is a {\it translational tiling set} for  ${\Bbb F}_q^d$ if there exists $\Lambda\subseteq {\Bbb F}_q^d$ such that }
$$
{\Bbb F}_q^d = \bigcup_{\lambda\in \Lambda} (F+\lambda) \ \mbox{(disjoint union)}.
$$
We say a set $E$ has a spectrum $L$ if the characters $\{\chi_l\}_{l\in L}$ is an orthonormal basis for $L^2(E)$. In this case, we say $E$ is a spectral and $(E, L)$ is a spectral pair.
}
\end{Def}

\medskip

      Our first result shows that  there exists {\it no} wavelet sets in the traditional sense for $L^2(\Bbb F_q^d)$.  More precisely,  there is no Parseval frame (thus no orthonormal basis) of type 
       (\ref{discrete wavelet system}) for  $L^2(\Bbb F_q^d)$ generated by any function of type $\psi:=  \check{1_E}$ (Theorem \ref{NO-ON-wavelet-basis}). However, later in this paper we   prove the existence of subsets $E$ in ${\mathbb F}_q^d$  for  which $\psi:= \check{1_E}$ generates  a  tight wavelet frame for {\it a subspace of $L^2(\Bbb F_q^d)$}. We shall call these sets {\it tight wavelet frame sets}.  We will then present an explicit construction  of  a class of  wavelet frame sets when $q\equiv 3$ (mod 4).
 %

We organize the paper as follows: In Section \ref{Sec:Parseval Wavelet Frames}   
 we prove an analogy of Theorem \ref{Wang} (Theorem 1.1, \cite{Wang}) in $\Bbb F_q^d$ and we 
study   necessary and sufficient conditions for a set $E$ such that $E$ is a tight wavelet frame set for a subspace of $L^2(\Bbb F_q^d)$. 
     These results are collected in Theorems \ref{PWF} and \ref{orthogonal system}. In this section we also provide a counter example where the disjointness of the sets does not necessarily hold  for tight wavelet frame sets.  In Section \ref{Existence of wavelet sets}, we will show the existence of a multiplicative tiling set  in ${\mathbb F}_q^d$ for $q$ prime and  $q\equiv 3$ (mod 4). 
 %
 %
 In Section \ref{existence of admissible sets}  we present  a constructive   approach in  Theorem \ref{Thm:admissible sets} to prove the existence of tight wavelet frame sets in  $\Bbb F_q^d$, when $d=2$,  $q$ prime and  $q\equiv 3$ (mod 4).

 In the sequel we shall assume that the subset $E$ is non trivial, i.e., $1<\sharp E<q^d$. 
\section{Sufficient Conditions for Tight Wavelet Frame Sets}\label{Sec:Parseval Wavelet Frames}
 Here we first review the Fourier transform
  on $\Bbb F_q^d$.  If $f$ is a function on  ${\mathbb F}_q^d$, then the
 Fourier coefficients, $\hat f(\xi)$,  of  $f$  are given by
$$
\widehat{f}(\xi) = q^{-d}\sum_{m\in{\mathbb F}_q^d} a_m\overline{\chi_m(\xi)} \quad \xi\in {\mathbb F}_q^d
$$
where
 $\chi_m(\xi)=
e^{2\pi i \frac{m \cdot\xi}{q}}$ is the character and  $m\cdot \xi$ is the usual inner product and $a_m= f(m)$.  The function $\hat f$ is called the Fourier transform of $f$.
With the above notations,
$$f(x) = \sum_{m\in \Bbb F_q^d} c_m \chi_m(x), $$
where $c_m = \hat f(m)$. By the above definition, the Fourier transform $\mathcal F: L^2({\mathbb F}_q^d) \to L^2({\mathbb F}_q^d)$ given by $f\to \hat f$ is a unitary map. For any $g\in L^2({\mathbb F}_q^d)$, we shall denote by $\check g$ the inverse Fourier transform of $g$.

Let Aut$({\mathbb F}_q^d)$ be the set of all automorphisms on ${\mathbb F}_q^d$.  Let $\psi$ be a function defined on  $\mathbb F_q^d$. For $a\in {\text Aut}({\mathbb F}_q^d)$ and $t\in \Bbb F_q^d$ define  the associated dilation  and translation operators  $\delta_a$ and $\tau_t$ by 
$$\delta_a \psi(x)= \psi(a x) ,$$
and   
$$\tau_t \psi(x) = \psi(x-t), $$ 
respectively. 
These operators are {\it unitary}.



\begin{lemma} Let $\psi: \Bbb F_q^d \to \Bbb C$. Then
for given automorphism $a\in \mbox{Aut}({\mathbb F}_q^d)$ and  $t\in \Bbb F_q^d$   we have
\begin{align}
\widehat{\delta_a\tau_t \psi}(m)= \overline{\chi_{a^{-1} t}(m)}\hat \psi(a^\ast m),
\end{align}
where $a^\ast= (a^t)^{-1}= (a^{-1})^t$ is the inverse transpose of $a$. If $\hat\psi= 1_E$, then
\begin{align}
\widehat{\delta_a\tau_t \psi}(m)= \overline{\chi_{a^{-1} t}(m)}1_{a^t(E)}(m) .
\end{align}
\end{lemma}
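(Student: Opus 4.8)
The plan is to unwind the two unitary operators, reduce the whole expression to a single Fourier transform after a linear change of variables, and then read off both formulas. First I would observe that $\delta_a\tau_t\psi(x) = (\tau_t\psi)(ax) = \psi(ax-t)$, so that directly from the definition of the Fourier transform on $\Bbb F_q^d$,
$$
\widehat{\delta_a\tau_t\psi}(m) = q^{-d}\sum_{x\in\Bbb F_q^d}\psi(ax-t)\,\overline{\chi_x(m)}.
$$

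Next I would make the substitution $y = ax-t$. Since $a\in\mbox{Aut}(\Bbb F_q^d)$, the map $x\mapsto ax-t$ is a bijection of $\Bbb F_q^d$; thus $x = a^{-1}(y+t)$ and $y$ ranges over all of $\Bbb F_q^d$, turning the sum into $q^{-d}\sum_{y}\psi(y)\,\overline{\chi_{a^{-1}(y+t)}(m)}$.

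The key algebraic step, and essentially the only place any care is needed, is to factor the character. Using that $\chi_x(m)=e^{2\pi i (x\cdot m)/q}$ is multiplicative in its index and the transpose identity $(a^{-1}y)\cdot m = y\cdot\big((a^{-1})^t m\big)$, I would write
$$
\chi_{a^{-1}(y+t)}(m) = \chi_{a^{-1}y}(m)\,\chi_{a^{-1}t}(m) = \chi_y(a^\ast m)\,\chi_{a^{-1}t}(m),
$$
where $a^\ast = (a^{-1})^t = (a^t)^{-1}$. The factor $\overline{\chi_{a^{-1}t}(m)}$ is independent of the summation variable and pulls out in front, while the remaining sum $q^{-d}\sum_{y}\psi(y)\,\overline{\chi_y(a^\ast m)}$ is exactly $\hat\psi(a^\ast m)$ by definition. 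This gives the first displayed identity.

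For the second formula I would simply substitute $\hat\psi = 1_E$, so $\hat\psi(a^\ast m) = 1_E(a^\ast m)$, which is $1$ precisely when $a^\ast m\in E$, that is, when $m\in (a^\ast)^{-1}(E)$. Since $(a^\ast)^{-1} = a^t$, this is $1_{a^t(E)}(m)$, completing the claim. The main obstacle is purely bookkeeping: keeping the inverse–transpose relations straight, so that the dilation acts on the frequency side through $a^\ast$ while the indicator set is transported by its inverse $a^t$. Everything else is a routine computation.
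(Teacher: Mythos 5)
Your proof is correct and follows essentially the same route as the paper: the paper computes $\widehat{\tau_t\psi}$ and $\widehat{\delta_a\psi}$ separately by the same changes of variable and then composes them, whereas you perform the single substitution $y=ax-t$ on the composed operator, but the underlying algebra (the transpose identity $(a^{-1}y)\cdot m = y\cdot a^\ast m$ and the set identity $(a^\ast)^{-1}(E)=a^t(E)$) is identical.
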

 
\begin{proof} Let $t\in\Bbb F_q^d$ and  $a\in \mbox{Aut}(\Bbb F_q^d)$.   By applying the Fourier  transform and  using a change of variable in the definition of the Fourier transform, for all    $ m \in \Bbb F_p^d$  we have
 
\begin{align}\label{Fourier transform of translations}
 \widehat{\tau_t\psi}(m) &=q^{-d}\sum_{n\in \Bbb F_q^d} \psi(n-t) \overline{\chi_m(n)} \\\notag
 & =q^{-d}\sum_{n\in \Bbb F_q^d} \psi(n) \overline{\chi_m(
 n+t)} \\\notag
 &=
\overline{\chi_m(t)}
 \left(q^{-d}\sum_{n\in \Bbb F_q^d} \psi(n) \overline{\chi_m(n)}\right)  \\\notag
&= \overline{\chi_m(t)}
   \hat \psi(m)\\\notag
   &=  \overline{\chi_t(m)}
   \hat \psi(m),\\\notag
 \end{align}
 and
 \begin{align}\label{Fourier transform of dilation}
\widehat{\delta_a \psi}(m) &= q^{-d} \sum_{n\in \Bbb Z_p^d} \psi(a n) \overline{\chi_m(n)} \\\notag
  &=q^{-d}
\sum_{n\in \Bbb Z_p^d} \psi(n) \overline{\chi_m(a^{-1}n)}\\
& = q^{-d}\sum_{n\in \Bbb Z_p^d} \psi(n) \overline{\chi_{a^*m}(n)}\\
  &= \hat\psi(a^\ast m) .
\end{align}
Now, a combination of  (\ref{Fourier transform of dilation}) and  (\ref{Fourier transform of translations})   yields the assertion of the lemma:
 \begin{align}\label{Fourier transform of dilation and translation}
 \widehat{\delta_a\tau_t \psi}(m)  =\widehat{\tau_t \psi}(a^\ast m) =  \overline{\chi_{a^\ast m}(t)}\hat \psi(a^\ast m)= \overline{\chi_{a^{-1} t}(m)}\hat \psi(a^\ast m) .
  \end{align}
  By the equality $1_E(a^\ast m) = 1_{a^t(E)}(m)$, the second part of the lemma also holds true.
 \end{proof}
 %
  %

The proof of the  following result  is straightforward  using the  Fourier transform.
\begin{lemma}\label{ONB-FT-ONB}
Let $\mathcal A\subseteq Aut(\Bbb F_q)$ and $T\subseteq\Bbb F_q^d$, and the family
$$\{\delta_a \tau_t \psi: \ a\in \mathcal A, \ t\in T\}$$
is an orthonormal basis for $L^2(\Bbb F_q^d)$ if and only if the family
$$\{\overline{\chi_{a^{-1}t}(m)}\hat \psi(a^\ast m): \ t\in T, a\in \mathcal A\}$$
is an orthonormal basis  for $L^2(\Bbb F_q^d)$. Here, $m$ is the generic variable.
\end{lemma}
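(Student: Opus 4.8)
The plan is to read off the equivalence directly from the unitarity of the Fourier transform together with the formula computed in the preceding lemma. Recall from the discussion before the lemmas that $\mathcal F\colon L^2(\Bbb F_q^d)\to L^2(\Bbb F_q^d)$, $f\mapsto \hat f$, is a unitary operator, and that the preceding lemma supplies the explicit identity
$$
\widehat{\delta_a\tau_t\psi}(m)=\overline{\chi_{a^{-1}t}(m)}\,\hat\psi(a^\ast m).
$$
Thus the second family is exactly the image of the first family under $\mathcal F$: applying $\mathcal F$ to the generator $\delta_a\tau_t\psi$ produces precisely the function $m\mapsto \overline{\chi_{a^{-1}t}(m)}\,\hat\psi(a^\ast m)$, so the two indexed families correspond element by element under $\mathcal F$.

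First I would record the standard fact that a unitary operator carries orthonormal bases to orthonormal bases in both directions. Since $\mathcal F$ is a bijective isometry it preserves inner products, hence orthonormality, and it maps a complete system to a complete system; the same holds for $\mathcal F^{-1}$, which is again unitary. In particular $\mathcal F$ sets up a bijection between the two families, so no collapsing of indices can occur and the two families have the same cardinality and the same Gram matrix.

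Combining these two observations finishes the proof: the first family is an orthonormal basis for $L^2(\Bbb F_q^d)$ if and only if its image under the unitary $\mathcal F$, which is the second family, is an orthonormal basis. There is no genuine obstacle here — the entire content has already been absorbed into the unitarity of $\mathcal F$ and the Fourier computation of the preceding lemma — and the only point worth stating explicitly is that unitarity (of both $\mathcal F$ and its inverse) is what delivers the \emph{if and only if} rather than a single implication.
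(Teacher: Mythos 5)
Your proof is correct and is exactly the argument the paper intends: the paper simply remarks that the lemma is ``straightforward using the Fourier transform,'' and you have filled in that route --- the second family is the image of the first under the unitary map $\mathcal F$ by the preceding lemma, and unitarity carries orthonormal bases to orthonormal bases in both directions. No further comment is needed.
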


\medskip

The next result  proves the existence of no Parseval  wavelet frame for $L^2(\Bbb F_q^d)$ generated by  $\psi:= \check{1_E}$. 
\begin{theorem}\label{NO-ON-wavelet-basis} There exists no  non-empty  subset  $E\subsetneq\Bbb F_q^d$
 such that $\psi:= \check{1_E}$, the inverse Fourier transform of the indicator function $1_E$,  is the generator of  a Parseval wavelet frame  for $L^2(\Bbb F_q^d)$.
\end{theorem}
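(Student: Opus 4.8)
The plan is to transfer the whole question to the Fourier side and then test the Parseval identity against the single most rigid test function available, the point mass at the origin. First I would invoke unitarity of $\mathcal F$: since $\psi=\check{1_E}$, the physical-side system $\{\delta_a\tau_t\psi\}_{a\in\mathcal A,\,t\in T}$ is a Parseval frame for $L^2(\Bbb F_q^d)$ if and only if its image under $\mathcal F$, the family $\{g_{a,t}\}$ with $g_{a,t}(m)=\overline{\chi_{a^{-1}t}(m)}\,1_{a^t(E)}(m)$ (the formula computed in the first lemma), is a Parseval frame; this is just the Parseval analogue of Lemma~\ref{ONB-FT-ONB}, and it reduces the theorem to ruling out the latter family.

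Next I would exploit the fact that every $a\in\mathcal A\subset\mathrm{Aut}(\Bbb F_q^d)$ fixes the origin. Since $a^t(\vec 0)=\vec 0$, we have $\vec 0\in a^t(E)$ exactly when $\vec 0\in E$, while $\chi_{a^{-1}t}(\vec 0)=1$. Hence $g_{a,t}(\vec 0)=1_E(\vec 0)$ for every pair $(a,t)$, a value that is completely independent of $a$ and $t$: the origin is the common fixed point that neither the dilation nor the modulation factor can move. Evaluating the Parseval relation $\sum_{a,t}|\langle h,g_{a,t}\rangle|^2=\|h\|^2$ at $h=1_{\{\vec 0\}}$, the indicator of the origin, and using $\langle h,g_{a,t}\rangle=\overline{g_{a,t}(\vec 0)}=1_E(\vec 0)$ together with $\|h\|^2=1$, I obtain $|\mathcal A|\,|T|\,1_E(\vec 0)=1$.

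This single identity finishes the argument. It forces simultaneously $\vec 0\in E$ (so that $1_E(\vec 0)=1$) and $|\mathcal A|\,|T|=1$, i.e. the entire system collapses to one vector. But any frame for $L^2(\Bbb F_q^d)$ must span it and hence contain at least $\dim L^2(\Bbb F_q^d)=q^d\geq 2$ elements, so a one-element family cannot be a frame at all; this contradiction shows no admissible $E$ exists. As an alternative way to close the last step one may instead use the frame-operator trace: a Parseval frame forces $\sum_{a,t}\|g_{a,t}\|^2=q^d$, that is $|\mathcal A|\,|T|\,|E|=q^d$, which combined with $|\mathcal A|\,|T|=1$ gives $|E|=q^d$, contradicting $E\subsetneq\Bbb F_q^d$.

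The only delicate point I anticipate is bookkeeping of the normalization: one must run the test on the side of $\mathcal F$ where the relevant inner product is the plain counting product, so that $1_{\{\vec 0\}}$ has norm exactly $1$ and the right-hand side of the Parseval identity is genuinely $1$ rather than a power of $q$. Once that normalization is pinned down the computation is immediate, and essentially all the content sits in the observation that $g_{a,t}(\vec 0)$ is forced to be constant in $(a,t)$ because the origin is a universal fixed point of the dilation group.
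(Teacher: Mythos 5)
Your proposal is correct and follows essentially the same route as the paper: pass to the Fourier side, test the Parseval identity against the point mass $1_{\{\vec 0\}}$, and use the fact that every automorphism fixes the origin to force $|\mathcal A|\,|T|\,1_E(\vec 0)=1$. Your closing step (a one-element family cannot span the $q^d$-dimensional space, or alternatively the trace identity $|\mathcal A|\,|T|\,|E|=q^d$) is a slightly cleaner way to finish than the paper's explicit construction of a function supported off $a^t(E)$, but the substance is the same.
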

\begin{proof} We shall prove this theorem by a contradiction argument. Assume that  for a subset $E$    there is an  automorphism set $\mathcal A\subset  \mbox{Aut}(\Bbb F_q^d)$ and a subset $\Lambda$ of $\Bbb F_q^d$ such that the set
$$\{ \delta_a\tau_\lambda \check{1_E}: \ a\in \mathcal A, \ \lambda\in \Lambda\}$$
is  a Parseval frame for $L^2(\Bbb F_q^d)$. By Lemma \ref{ONB-FT-ONB}, this is equivalent to say that  the family
 \begin{align}\label{xy}
 \{\overline{\chi_{a^\ast m}(\lambda)} 1_{a^t(E)}(m): \ \lambda\in \Lambda, a\in \mathcal A\} = \{\overline{\chi_{a^{-1} \lambda}(m)} 1_{a^t(E)}(m): \ \lambda\in \Lambda, a\in \mathcal A\}
 \end{align}
is a Parseval frame for $L^2(\widehat{\Bbb F_q^d})=L^2({\Bbb F_q^d})$. (Here,   $a^t$ is the transpose of $a$.)
  Let $\hat g= 1_{\{\vec 0\}}\in L^2(\widehat{\Bbb F_q^d})$ be the indicator function for the set $\{\vec 0\}$. Then
 \begin{align}\label{calculation}
 1= \|\hat g\|^2&= \sum_{a\in\mathcal A, \lambda\in\Lambda} |\langle g,  \delta_a\tau_\lambda \psi\rangle|^2\\\notag
 &= \sum_{a\in\mathcal A, \lambda\in\Lambda} |\langle \hat g, \widehat{\delta_a\tau_\lambda \psi}\rangle|^2
 \\\notag
 &= \sum_{a\in\mathcal A, \lambda\in\Lambda} |\sum_{m\in \Bbb F_q^d} \hat g(m) 1_{a^t(E)}(m) \overline{\chi_{a^\ast m}(\lambda)}|^2
  \\\notag
 &= \sum_{a\in\mathcal A, \lambda\in\Lambda} |1_{a^t(E)}(0)|^2\\\notag
&= \sharp(\Lambda)   \sum_{a\in\mathcal A} |1_{a^t(E)}(0)|^2 .
 \end{align}

 Here, we shall consider two cases: If $0\in E$, then $0\in a^t(E)$ for all $a\in \mathcal A$. Thus the above calculation implies that
  $ 1= \sharp(\mathcal A) \sharp(\Lambda). $
This means that the wavelet system must have only one element. Let $\mathcal A=\{a\}$ and $\Lambda=\{\lambda\}$. Then all the vectors in $L^2(\Bbb F_q^d)$ must be a constant multiple of $\delta_a \tau_\lambda \psi$. We show that this is not the case since    $E\neq \Bbb F_q^d$.   Let  $f\neq 0$ in  $L^2(\Bbb F_q^d)$ such that $\supp({\hat f}) \cap  a^t(E)=\emptyset$. Such function exists since $E$ is not the whole $\Bbb F_q^d$.  Then  there is no cosntant $c$ such that   $f= c \delta_a \tau_\lambda \psi$. This shows that the Parseval wavelet frame can not have only one element when $E\neq \Bbb F_q^d$, thus $ \sharp(\mathcal A) \sharp(\Lambda)>1$.

Now let us now assume that $0\not\in E$.  By the equations in (\ref{calculation}) we obtain  $1=0$ which is impossible.  This completes the proof our assertion.
\end{proof}

  As a result of
 Theorem \ref{NO-ON-wavelet-basis},  we have the following corollary.
 
 \medskip
 
  \begin{corollary}\label{cor:no ONB wavelet set}
There is no orthonormal basis of form $\{\delta_a \tau_\lambda\psi\}_{a\in \mathcal A, \lambda\in \Lambda}$ for $L^2(\Bbb F_q^d)$ where $\hat \psi=1_E$ and $E\subsetneq \Bbb F_q^d$.
 \end{corollary}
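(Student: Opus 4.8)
The plan is to observe that this corollary is an immediate consequence of Theorem \ref{NO-ON-wavelet-basis} together with the elementary fact that every orthonormal basis of a Hilbert space is automatically a Parseval frame. Indeed, if $\{x_k\}_{k\in I}$ is an orthonormal basis for a Hilbert space $\mathcal H$, then Parseval's identity gives $\sum_{k\in I}|\langle x, x_k\rangle|^2 = \|x\|^2$ for every $x\in\mathcal H$, which is precisely the two-sided frame inequality with both bounds equal to $1$. Hence any orthonormal basis satisfies the definition of a Parseval frame, and the class of orthonormal bases sits inside the class of Parseval frames.

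With this in hand, I would argue by contradiction. Suppose there were an orthonormal basis of the form $\{\delta_a\tau_\lambda\psi\}_{a\in\mathcal A,\lambda\in\Lambda}$ for $L^2(\Bbb F_q^d)$ with $\hat\psi = 1_E$ and $E\subsetneq\Bbb F_q^d$. By the observation above, this same family is then a Parseval frame for $L^2(\Bbb F_q^d)$ generated by $\psi = \check{1_E}$. But Theorem \ref{NO-ON-wavelet-basis} asserts that no non-empty $E\subsetneq\Bbb F_q^d$ can generate such a Parseval wavelet frame, and this contradiction closes the argument. The only side point to dispatch is the non-emptiness hypothesis required by the theorem: if $E=\emptyset$ then $\psi=\check{1_E}\equiv 0$, so the family is $\{0\}$, which can never be an orthonormal basis; thus the orthonormal-basis assumption forces $E$ to be non-empty, and the theorem applies.

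There is essentially no obstacle here, since the entire analytic content resides in Theorem \ref{NO-ON-wavelet-basis}. The corollary merely specializes the ``no Parseval frame'' conclusion to the narrower class of orthonormal bases, using only the trivial implication that an orthonormal basis is a Parseval frame. I would therefore expect the written proof to be a single short paragraph invoking Theorem \ref{NO-ON-wavelet-basis} directly.
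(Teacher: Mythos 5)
Your proposal is correct and matches the paper's intent exactly: the paper states this corollary as an immediate consequence of Theorem \ref{NO-ON-wavelet-basis}, relying precisely on the fact that an orthonormal basis is a Parseval frame. Your extra remark dispatching the case $E=\emptyset$ is a sensible touch, but the argument is otherwise the same one-line deduction the paper has in mind.
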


{\it Remark.} Notice when $E=\Bbb F_q^d$,   by a similar calculation  (\ref{calculation}) for $\hat g= 1_{\Bbb F_q^d}$,  we can conclude that  the translation set  $\Lambda$ must contains $0$ and $\sharp(\mathcal A)=1$. Then by the divisibility we must have   $\Lambda=\Bbb F_q^d$. This implies that the set   $\{\tau_\lambda \psi: \lambda\in \Bbb F_q^d\}$,  with $\hat \psi= 1_{\Bbb F_q^d}$,   forms an orthonormal basis for $L^2(\Bbb F_q^d)$. But   this    is already well-known by the Fourier transform. Therefore, it is reasonable  to assume  in Theorem \ref{NO-ON-wavelet-basis}   that $E\neq \Bbb F_q^d$.  
\medskip

As we observed above,   Corollary \ref{cor:no ONB wavelet set}
  implies  the existence of no orthonormal basis of form  (\ref{xy}) for $L^2(\Bbb F_q^d)$. However, in the following theorem, we prove that if we choose $E$
   appropriately  in $\Bbb F_q^d$, then the family
 (\ref{xy}) for $E^*= E\setminus \{0\}$ is a tight wavelet frame  for $L^2(\Bbb (F_q^d)^\ast)$.\\
  For the rest, we use the notation  $Y:= (F_q^d)^\ast$.

\medskip

 \begin{definition} Given $E$ and $L$ subsets of $\Bbb F_q^d$, we say $(E,L)$ is a (tight) frame spectral pair if the set of ``exponentials" $\{ \chi_{l}:  l\in L\}$ is  a  (tight) frame for $L^2(E)$.
 \end{definition}

\medskip

 \begin{theorem}\label{P.F}
 Let $E$ and $L$ be subsets of $\Bbb F_q^d$  and $(E,L)$ is a spectral pair. Assume that  $E^*$ is a multiplicative tiling set with respect to a set of  automorphisms $\mathcal A\subset  A ut(\Bbb F_q^d)$.
 Then the following hold true:

 \begin{itemize}
 \item[(1)]  $(E^*,L)$ is tight frame spectral pair with the frame bound  $\sharp(E)$.
 \item[(2)] $\forall \ a\in \mathcal A$, \ $(a(E^*),(a^{-1})^t(L))$ is tight frame spectral pair with the frame bound  $\sharp(E)$.
 \item[(3)] The family   $\{ 1_{a(E^*)}\chi_{(a^{-1})^t(l)}:  l\in L, a\in \mathcal A\}$ is a tight frame for $L^2(Y)$ with the frame bound $\sharp(E)$, where $Y=(\Bbb F_q^d)^*$.
 \item[(4)] If $0\not\in E$, then    $\{ (\sharp E)^{-1/2} 1_{a(E)}\chi_{(a^{-1})^t(l)}:  l\in L, a\in \mathcal A\}$ is an orthonormal basis for $L^2(Y)$. 
 \end{itemize}
 \end{theorem}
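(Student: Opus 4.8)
My plan is to treat the four parts as a cascade, all flowing from a single reformulation of the hypothesis. Throughout I would fix the counting-measure inner product $\langle f,g\rangle_{L^2(S)}=\sum_{x\in S}f(x)\overline{g(x)}$, under which the normalizations in the statement (frame bound $\sharp E$, the factor $(\sharp E)^{-1/2}$ in (4)) are the consistent ones. The first thing I would record is that, in this convention, saying $(E,L)$ is a spectral pair is exactly saying that $\{\chi_l\}_{l\in L}$ is an orthogonal basis of $L^2(E)$ with $\|\chi_l\|^2=\sharp E$; equivalently $\{(\sharp E)^{-1/2}\chi_l\}_{l\in L}$ is an orthonormal basis, and hence $\{\chi_l\}_{l\in L}$ is a tight frame for $L^2(E)$ with bound $\sharp E$. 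Every subsequent claim is obtained from this one by transport maps that preserve the frame inequality.

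For (1) the only real issue is that $E^{*}$ has one fewer point than $E$ when $0\in E$, so the characters become overcomplete on $L^2(E^{*})$. I would dispose of this by extension by zero: given $f\in L^2(E^{*})$, let $\tilde f\in L^2(E)$ agree with $f$ on $E^{*}$ and vanish at $0$. Then $\langle f,\chi_l\rangle_{L^2(E^{*})}=\langle\tilde f,\chi_l\rangle_{L^2(E)}$ and $\|f\|=\|\tilde f\|$, so the tight-frame identity on $L^2(E)$ passes verbatim to $L^2(E^{*})$ with the same bound $\sharp E$. For (2) I would transport (1) through the automorphism using the adjoint identity $\chi_{(a^{-1})^t l}(x)=\chi_l(a^{-1}x)$, which is precisely the content of the dilation formula $\widehat{\delta_a\psi}(m)=\hat\psi(a^{\ast}m)$ of the preceding lemma. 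The dilation $g:=f\circ a$ is a unitary bijection $L^2(a(E^{*}))\to L^2(E^{*})$, and the change of variable $y=ax$ gives $\langle f,\chi_{(a^{-1})^t l}\rangle_{L^2(a(E^{*}))}=\langle g,\chi_l\rangle_{L^2(E^{*})}$ together with $\|g\|=\|f\|$; summing over $l$ and invoking (1) yields the tight-frame property for $(a(E^{*}),(a^{-1})^t(L))$ with bound $\sharp E$.

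Part (3) is where the multiplicative tiling hypothesis enters. Since $Y=\bigsqcup_{a\in\mathcal A}a(E^{*})$ is a disjoint union, $L^2(Y)$ is the orthogonal direct sum $\bigoplus_{a\in\mathcal A}L^2(a(E^{*}))$. For $f\in L^2(Y)$ I would set $f_a:=f\cdot 1_{a(E^{*})}$, observe $\langle f,1_{a(E^{*})}\chi_{(a^{-1})^t l}\rangle_{L^2(Y)}=\langle f_a,\chi_{(a^{-1})^t l}\rangle_{L^2(a(E^{*}))}$, and then sum the identities from (2) across $a$: $\sum_{a,l}|\langle f,1_{a(E^{*})}\chi_{(a^{-1})^t l}\rangle|^2=\sum_a \sharp E\,\|f_a\|^2=\sharp E\,\|f\|^2$, the last equality being orthogonality of the pieces. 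For (4), when $0\notin E$ we have $E^{*}=E$, so (3) already gives a tight frame with bound $\sharp E$; rescaling every vector by $(\sharp E)^{-1/2}$ turns it into a Parseval frame. Because $\|1_{a(E)}\chi_{(a^{-1})^t l}\|^2=\sharp(a(E))=\sharp E$, each rescaled vector is a unit vector, and a Parseval frame of unit vectors is automatically an orthonormal basis: taking $f=e_j$ in the Parseval identity forces $\|e_j\|^4+\sum_{i\neq j}|\langle e_j,e_i\rangle|^2=\|e_j\|^2=1$, hence orthogonality, while completeness is built into the Parseval property.

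The only genuinely delicate point is (1): one must see that deleting the origin neither destroys tightness nor changes the bound, which remains $\sharp E$ rather than $\sharp E^{*}$. The extension-by-zero device makes this transparent, and the trace check $\sum_{l}\|\chi_l\|^2_{L^2(E^{*})}=(\sharp E)(\sharp E-1)=(\sharp E)\dim L^2(E^{*})$ confirms the bound. Everything else is bookkeeping: keeping the counting-measure normalization fixed, and tracking the adjoint pairing $(a^{-1})^t$ so the characters transform correctly under dilation.
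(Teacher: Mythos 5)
Your proposal is correct and follows essentially the same route as the paper: part (1) via restriction to $E^*$ (your extension-by-zero computation is just the adjoint formulation of the paper's projection argument), part (2) by transporting through the unitary dilation $f\mapsto f\circ a^{-1}$ using $\chi_{(a^{-1})^t l}(x)=\chi_l(a^{-1}x)$, part (3) by the orthogonal decomposition $L^2(Y)=\bigoplus_{a\in\mathcal A}L^2(a(E^*))$ coming from the tiling, and part (4) from the fact that a Parseval frame of unit vectors is an orthonormal basis. No gaps; your explicit attention to the normalization and the trace check on the frame bound $\sharp E$ is a welcome addition but does not change the argument.
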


 \begin{proof} Assume that $E$ has a spectrum $L$. Then $\{ \sharp(E)^{-1/2}\chi_{l}:  l\in L\}$ is an orthonormal basis for  $L^2(E)$.
 To proof  (1),  note that the map $f\to f 1_{E^*}$ is a projection of $L^2(E)$ onto $L^2(E^*)$, thus the image of the orthonormal basis 
 $\{\sharp(E)^{-1/2} \chi_{l}:  l\in L\}$ by this map  is a Parseval frame for $L^2(E^*)$. 
  This proves the statement (1). To prove (2), we instead prove the following: Let $F\subset \Bbb F_q^d$ and $\{ \chi_{l}:  l\in L\}$ be a frame for $L^2(F)$ with the frame bounds $0<A\leq B <\infty$. Then $\{ \chi_{(a^{-1})^t(l)}:  l\in L\}$ is  a frame for $L^2(a(F))$ with the unified frame bounds,  $A$ and $B$. To prove this,   define the map $T_a: L^2(F)\to L^2(a(F))$ by $f\to f\circ a^{-1}$. The image of 
   $\{\chi_{l}:  l\in L\}$ under  $T_a$  is $\{ 1_{a(F)}\chi_{(a^{-1})^t(l)}:  l\in L\}$ and the map  
   is unitary. Therefore   $\{ 1_{a(F)}\chi_{(a^{-1})^t(l)}:  l\in L\}$ forms a frame for $L^2(a(F))$ with the same frame  bounds. 
  

  %
  %

     To prove (3), note that by the assumption on the multiplicative tiling property of $E^*$ we have 
     $$L^2(Y)= \oplus_{a\in \mathcal A} L^2(a(E^*)). $$
     To complete (3),  we shall  prove the following, instead.

     Let $X$ be measurable set and $A$ be an index set such that  $X= \cup_{a\in A} X_a$  (disjoint). Assume that for all $a\in A$,  $L^2(X_a)$ has a tight frame $\{f_{n,a}: \ n\in I_a\}$ with frame bound $C$.  We claim that the family $\{f_{n,a}: \  a\in A, \ n\in I_a\}$ is a tight frame for
     $L^2(X)$ with the frame bound $C$. To prove that, let $g\in L^2(X)$. Sine $\{X_a: a\in A\}$ is a partition for $X$, then   $g= \oplus_{a\in A} g_a$ ,   $g_a:= g1_{X_a}$,  and we have 
     \begin{align}
     \|g\|^2 &=  \sum_{a\in A} \|g_a\|_{L^2(X_a)}^2\\\notag
     &=  \sum_{a\in A} \left(C^{-1}\sum_{n\in I} |\langle g_a, f_{n,a}\rangle_{L^2(X_a)}|^2\right)\\\notag
       &=C^{-1}  \sum_{a\in A, n\in I_a}    |\langle g, 1_{X_a}f_{n,a}\rangle_{L^2(X)}|^2 .
     \end{align}
     This completes the proof of the assertion,  thus (3).  For (4), note that $E=E^*$ when $0\not\in E$. Then the  proof  can be obtained directly from the fact that any Parseval frame with normalized frame elements is an orthonormal basis.
  \end{proof}
\medskip

  We state the following result from \cite{IMP15} which characterizes tiling set and spectral sets in $\Bbb F_q^2$ and proves the  Fuglede conjecture for $\Bbb F_q^2$.

\medskip

  \begin{theorem}[Fuglede conjecture for  $\Bbb F_q^2$]\label{Fuglede Conj. holds}
  A subset $\emptyset\neq E$ of $\Bbb F_q^2$ tiles   $\Bbb F_q^2$ with its translations if and only if there is a set $L\subseteq \Bbb F_q^2$ such that $(E,L)$ is  a spectral pair.
  \end{theorem}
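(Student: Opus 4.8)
The plan is to move both hypotheses into the Fourier domain and reduce everything to the zero set $Z(E):=\{m\in\mathbb{F}_q^2:\widehat{1_E}(m)=0\}$. First I would record two dictionary entries, both immediate from Parseval and the orthogonality relations already exploited in the proof of Theorem~\ref{NO-ON-wavelet-basis}: a set $\Lambda$ tiles $E$ translationally (that is, $\mathbb{F}_q^2=\bigcup_{\lambda\in\Lambda}(E+\lambda)$ disjointly) if and only if $\sharp E\cdot\sharp\Lambda=q^2$ and $\widehat{1_E}(m)\widehat{1_\Lambda}(m)=0$ for every $m\neq 0$; and $(E,L)$ is a spectral pair if and only if $\sharp L=\sharp E$ and $(L-L)\setminus\{0\}\subseteq Z(E)$. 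The whole theorem will then follow by comparing these two conditions through the common object $Z(E)$.

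The single place where the primality of $q$ enters is a structural description of $Z(E)$. Writing $\widehat{1_E}(m)$ as a sum of $q$-th roots of unity grouped by the value of the linear form $x\mapsto m\cdot x$, and using that over a prime field a combination $\sum_j n_j\zeta^j$ with $\zeta=e^{2\pi i/q}$ and $n_j\in\mathbb{Z}_{\ge 0}$ vanishes only when all $n_j$ coincide, one obtains: for $m\neq 0$, $\widehat{1_E}(m)=0$ if and only if $E$ is equidistributed along $m$, i.e. every fibre $\{x:m\cdot x=j\}$ meets $E$ in exactly $\sharp E/q$ points. Two consequences follow: membership in $Z(E)$ depends only on the direction (line through the origin) of $m$, so $Z(E)$ is a union of punctured lines, and $Z(E)\neq\emptyset$ forces $q\mid\sharp E$. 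I shall call a direction \emph{good} if it lies in $Z(E)$.

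Next I would pin down the admissible cardinalities. For tiling this is automatic: $\sharp E\mid q^2$, so $\sharp E\in\{1,q,q^2\}$. For spectrality, suppose $1<\sharp E<q^2$ and write $\sharp E=kq$. A second-moment count of the fibre sizes, summed over all $q+1$ directions, equals $kq^2(1+k)$, which strictly exceeds the value $(q+1)k^2q$ that would occur if every direction were good; hence some direction $V_0$ is not good, i.e. $(V_0\setminus\{0\})\cap Z(E)=\emptyset$. A spectrum $L$ then cannot contain two points differing by a nonzero element of $V_0$, so $L$ is a partial transversal to the $q$ cosets of $V_0$ and $\sharp E=\sharp L\le q$; thus $\sharp E=q$. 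This reduces the theorem to $\sharp E=q$, the cases $\sharp E\in\{1,q^2\}$ being trivially both tiling and spectral.

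For $\sharp E=q$ I would show that each property is equivalent to $E$ having at least one good direction. If $d_0$ is good, the line $L=V_{d_0}$ through the origin satisfies $(L-L)\setminus\{0\}=V_{d_0}\setminus\{0\}\subseteq Z(E)$ with $\sharp L=q$, so $(E,V_{d_0})$ is a spectral pair; and choosing $\Lambda$ to be the line through the origin whose single non-good direction is precisely $d_0$ makes $Z(E)\cup Z(\Lambda)$ exhaust $\mathbb{F}_q^2\setminus\{0\}$, so $\Lambda$ tiles $E$. Conversely, a spectrum of size $q$ forces $Z(E)\neq\emptyset$, while a tiling forces $Z(E)\cup Z(\Lambda)$ to cover all $q+1$ directions, the same second-moment bound giving $\Lambda$ at most $q$ good directions; in both cases $E$ inherits a good direction. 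Chaining the three steps yields the stated equivalence. I expect the third step, the cardinality classification for spectral sets, to be the crux: the root-of-unity lemma together with the second-moment inequality are exactly what cap the number of good directions and force $\sharp L\le q$, and this is precisely the mechanism that would fail for composite moduli or in higher dimension, consistent with the known subtlety of Fuglede's conjecture there; everything downstream is bookkeeping with the good-direction dictionary.
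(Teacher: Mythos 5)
Your argument is correct, but note that the paper does not prove this statement at all: Theorem \ref{Fuglede Conj. holds} is imported verbatim from \cite{IMP15}, so the only meaningful comparison is with that paper's proof. Your route is essentially the same circle of ideas as \cite{IMP15}, organized around the same key lemma: since $q$ is prime, the minimal polynomial of $e^{2\pi i/q}$ forces $\widehat{1_E}(m)=0$ (for $m\neq 0$) to be equivalent to equidistribution of $E$ over the fibres of $x\mapsto m\cdot x$, whence the zero set is a union of punctured lines and $q\mid \sharp E$ whenever it is nonempty. Your packaging is arguably cleaner in two places: (i) the second-moment identity $\sum_{V}\sum_j (n_j^{(V)})^2=kq^2(1+k)$ versus the value $(q+1)k^2q$ that all-good directions would force is a one-line way to cap the number of good directions, which simultaneously yields $\sharp E\le q$ for nontrivial spectral sets and the existence of a direction not good for the tiling partner $\Lambda$; and (ii) reducing both properties, for $\sharp E=q$, to the single condition ``$E$ has at least one good direction'' makes the equivalence transparent, with the spectrum and the tiling complement both realized as lines through the origin ($L=V_{d_0}$ and $\Lambda=V_{d_0}^{\perp}$ respectively). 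All the steps check out: the Fourier dictionaries for tiling and for spectral pairs are standard, the count of pairs $(x,y)\in E^2$ with $m_V\cdot(x-y)=0$ over the $q+1$ lines is $(q+1)\sharp E+\sharp E(\sharp E-1)$ as you claim, and the strict inequality $kq^2(1+k)>(q+1)k^2q$ reduces to $q>k$. The one point worth making explicit in a written version is that ``good'' is well defined on directions, i.e.\ that $\widehat{1_E}(cm)=0$ for one nonzero scalar $c$ iff for all, which is immediate from the equidistribution characterization; with that spelled out the proof is complete and self-contained, and it would be a reasonable appendix making the present paper independent of \cite{IMP15} for the $d=2$ input to Theorem \ref{Thm:admissible sets}.
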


\medskip
 As the corollary of 
  Theorems \ref{Fuglede Conj. holds}  and  Theorem \ref{P.F} (3) we have the following result. 
 
  \begin{corollary} Assume that $E$  is a translation tiling for  $\Bbb F_q^2$ and  $E^*$ is a multiplicative tiling  with respect to   the  automorphisms $\mathcal A\subset A ut(\Bbb F_q^2)$. Then there is a  set $L$ in $\Bbb F_q^2$ such that the  family   $$\{(\sharp E)^{-1/2}\chi_{a^{-1}(l)} 1_{a(E^*)}:  l\in L, a\in \mathcal A\}$$
is a Parseval frame for $L^2(Y)$. The system is an orthonormal basis if $0\not\in E$. 
  \end{corollary}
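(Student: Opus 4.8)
The plan is to read the statement off directly from the two results it cites, using the Fuglede equivalence to supply the one hypothesis that is phrased combinatorially rather than spectrally. First I would invoke Theorem \ref{Fuglede Conj. holds}: since $E$ tiles $\Bbb F_q^2$ by translations, the Fuglede conjecture (which is a theorem in $\Bbb F_q^2$) produces a set $L\subseteq\Bbb F_q^2$ for which $(E,L)$ is a spectral pair. This is the only place the restriction $d=2$ enters, and it is exactly the device that converts the translational-tiling assumption into the Fourier-analytic statement that $\{\chi_l : l\in L\}$ is an orthonormal basis for $L^2(E)$.

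With the spectral pair $(E,L)$ now available, and the multiplicative tiling of $E^*$ by $\mathcal A$ assumed by hypothesis, both inputs of Theorem \ref{P.F} are in place. I would then apply part (3) of that theorem directly: the family $\{1_{a(E^*)}\chi_{(a^{-1})^t(l)} : l\in L,\ a\in\mathcal A\}$ is a tight frame for $L^2(Y)$ with frame bound $\sharp(E)$. Dividing every frame vector by $(\sharp E)^{1/2}$ rescales the frame bound to $1$, which by definition is a Parseval frame; this is the first assertion.

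The one genuine point to verify — and the step I expect to be the only real friction — is the cosmetic mismatch between the index $\chi_{(a^{-1})^t(l)}$ delivered by Theorem \ref{P.F}(3) and the index $\chi_{a^{-1}(l)}$ written in the statement. In the intended rotational setting the relevant automorphisms are orthogonal, so $a^t=a^{-1}$ and hence $(a^{-1})^t=a$; more robustly, one may replace the tiling set $\mathcal A$ by $\{(a^{-1})^t : a\in\mathcal A\}$, which tiles $Y$ multiplicatively exactly when $\mathcal A$ does, and this reindexing carries the family into the displayed form without changing it as a set. I would make this identification explicit so that the two displays literally coincide.

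Finally, for the orthonormal-basis claim I would appeal to Theorem \ref{P.F}(4): when $0\notin E$ we have $E^*=E$, so the normalized vectors $(\sharp E)^{-1/2}1_{a(E)}\chi_{(a^{-1})^t(l)}$ all have unit norm, and a Parseval frame consisting of unit vectors is automatically an orthonormal basis. Thus the very same family that is Parseval in general becomes an orthonormal basis for $L^2(Y)$ precisely under the extra hypothesis $0\notin E$, completing the proof.
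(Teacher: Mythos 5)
Your proposal is correct and follows exactly the route the paper intends: the corollary is stated as an immediate consequence of Theorem \ref{Fuglede Conj. holds} (to produce the spectrum $L$ from the translational tiling) together with Theorem \ref{P.F}, parts (3) and (4), followed by the trivial rescaling from frame bound $\sharp(E)$ to a Parseval frame. Your explicit handling of the $\chi_{a^{-1}(l)}$ versus $\chi_{(a^{-1})^t(l)}$ index mismatch is a worthwhile clarification of a notational inconsistency the paper leaves unaddressed, but it does not change the argument.
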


Given a subset $F$ of $\Bbb F_q^d$ we say a function $f \in L^2(\Bbb F_q^d)$ belongs to  $PW_F$ (The Paley-Wiener space) if its Fourier transform $\hat f$ has support in $F$. Then
 \begin{align}\label{def-PW}
 PW_F:= \{ f\in L^2(\Bbb F_q^d): \ \hat f(m)=0 \ \forall m\not\in F\}.
 \end{align}
    Note that if $F= (\Bbb F_q^d)^*$, then $PW_Y$ contains the  all functions
 $f\in L^2(\Bbb F_q^d)$ for which  $\sum_{m\in \Bbb F_q^d} f(m) = 0$.

 Our next result     is  analogous with Theorem 1.1. in \cite{Wang} in $\Bbb F_q^d$.   For  $\mathcal A\subset \mbox{Aut}(\Bbb F_q^d)$,  we denote by $\mathcal A^t$ the set of transpose of all matrices in $\mathcal A$. \\
%
%
  %

  \begin{theorem}\label{PWF} Assume that  $E\subseteq \Bbb F_q^d$  has a  spectrum $L$ and  $\emptyset\neq E^*$ is multiplicative tiling with respect to $\mathcal A^t$ for some $\mathcal A\subseteq \Bbb A ut(\Bbb F_q^d)$.  Take  $ \psi := (\sharp E)^{-1/2} \check{1_{E^*}}$  and  $Y:=  (\Bbb F_q^d)^*$.  Then the family $\mathcal W:=\{\delta_a \tau_l \psi: \ a\in \mathcal A, l\in L\}$ is an Parseval frame   for $PW_Y$. The system $\mathcal W$ is an orthonormal basis if $E=E^*$.  Conversely, if $\psi=(\sharp E)^{-1/2}\check{1_E}$ is a Parseval wavelet for $PW_Y$ with respect to  dilation set $\mathcal A$ and translation set $L$, then $Y= \cup_{a\in \mathcal A} a^t(E^*)$. The sets $a^t(E^*)$ are disjoint, then  $(E^*, L)$ is a tight frame spectral pair. 
\end{theorem}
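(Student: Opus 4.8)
The plan is to transport everything to the Fourier side, where the hypotheses become exactly those of Theorem \ref{P.F}. First I would apply the Lemma computing the Fourier transform of $\delta_a\tau_t\psi$, with $\hat\psi=(\sharp E)^{-1/2}1_{E^*}$, to obtain
$$\widehat{\delta_a\tau_l\psi}(m)=(\sharp E)^{-1/2}\overline{\chi_{a^{-1}l}(m)}\,1_{a^t(E^*)}(m).$$
Since $\mathcal F$ is unitary and $\mathcal F(PW_Y)=L^2(Y)$, the reduction used in Lemma \ref{ONB-FT-ONB} and in the proof of Theorem \ref{NO-ON-wavelet-basis} shows that $\mathcal W$ is a Parseval frame for $PW_Y$ if and only if $\{\widehat{\delta_a\tau_l\psi}\}$ is a Parseval frame for $L^2(Y)$. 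I would then set $\mathcal B:=\mathcal A^t$, so that the tiling hypothesis reads $Y=\bigcup_{b\in\mathcal B}b(E^*)$, which is precisely the setting of Theorem \ref{P.F}. Invoking part (3) of that theorem with automorphism set $\mathcal B$ gives that $\{1_{b(E^*)}\chi_{(b^{-1})^t l}\}$ is a tight frame for $L^2(Y)$ with bound $\sharp E$; substituting $b=a^t$ (so $b(E^*)=a^t(E^*)$ and $(b^{-1})^t=a^{-1}$) rewrites it as $\{1_{a^t(E^*)}\chi_{a^{-1}l}\}$. Our Fourier elements are the complex conjugates of these, scaled by $(\sharp E)^{-1/2}$; conjugation preserves tightness and its bound, while the scalar renormalises the bound $\sharp E$ to $1$, yielding the Parseval property.

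For the orthonormal basis claim when $E=E^*$ (equivalently $0\notin E$) I would instead invoke part (4) of Theorem \ref{P.F} with the same $\mathcal B$, which produces an orthonormal basis directly. Alternatively, one notes that $\delta_a,\tau_l$ and $\mathcal F$ are unitary, so $\|\delta_a\tau_l\psi\|^2=\|\psi\|^2=(\sharp E)^{-1}\sharp(E^*)=1$ when $E=E^*$, and a Parseval frame consisting of unit vectors is automatically an orthonormal basis.

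For the converse I would restate the hypothesis on the Fourier side: $\{F_{a,l}\}:=\{(\sharp E)^{-1/2}\overline{\chi_{a^{-1}l}}\,1_{a^t(E^*)}\}$ is a Parseval frame for $L^2(Y)$, where $E^*$ appears because $a^t(E)\cap Y=a^t(E^*)$ and elements of $L^2(Y)$ vanish at $0$. Testing the Parseval identity against the point mass $1_{\{v\}}$ for $v\in Y$, and using $|\chi|=1$, yields that $\#\{a\in\mathcal A:\ v\in a^t(E^*)\}$ is a constant $c$ independent of $v$. In particular every $v\in Y$ lies in some $a^t(E^*)$, giving the covering $Y=\bigcup_{a\in\mathcal A}a^t(E^*)$.

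The delicate point—and the step I expect to be the main obstacle—is disjointness. Point-mass testing controls only the \emph{diagonal} of the frame operator, and pins the multiplicity $c$ but not its value; one checks that $c=1$ (true tiling) is equivalent to $\sharp E=\sharp L$, and the counterexample provided later in this section shows that overlaps do occur for tight frames in general. Thus disjointness must be extracted from the full Parseval identity, i.e.\ from the vanishing of the off-diagonal frame kernel $\sum_a 1_{a^t(E^*)}(v)1_{a^t(E^*)}(w)\sum_{l\in L}\chi_{a^{-1}l}(w-v)$ for $v\neq w$, which is where the real work lies. Once disjointness is secured, $L^2(Y)=\bigoplus_{a\in\mathcal A}L^2(a^t(E^*))$, and the change of variables $m=a^t n$ together with the identity $\chi_{a^{-1}l}(a^t n)=\chi_l(n)$ decouples the Parseval identity across the pieces into the single condition $\sum_{l\in L}\bigl|\sum_{n\in E^*}h(n)\chi_l(n)\bigr|^2=\sharp E\,\|h\|^2$ for all $h\in L^2(E^*)$. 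This says exactly that $\{\chi_l\}_{l\in L}$ is a tight frame for $L^2(E^*)$ with bound $\sharp E$, i.e.\ that $(E^*,L)$ is a tight frame spectral pair.
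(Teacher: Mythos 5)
Your forward direction and the covering part of the converse coincide with the paper's own proof: pass to the Fourier side, invoke Theorem \ref{P.F}(3) (and (4), or the unit-norm Parseval argument, for the orthonormal case when $E=E^*$), and for the converse test the Parseval identity against functions supported off $\bigcup_{a\in\mathcal A}a^t(E^*)$ --- the paper uses $1_M$ with $M$ the uncovered set, your point masses $1_{\{v\}}$ give the same conclusion. The one place you diverge is the disjointness of the sets $a^t(E^*)$: you read it as a conclusion to be extracted from the Parseval identity and explicitly leave that extraction undone (``where the real work lies''). In the paper it is an additional \emph{hypothesis} for the final clause; the proof says ``by the hypothesis on the disjointness of the sets $a^t(E^*)$,'' and the garbled sentence ``The sets $a^t(E^*)$ are disjoint, then \dots'' in the statement is meant as ``If the sets $a^t(E^*)$ are disjoint, then \dots''. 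Your own observation --- the doubled-system example at the end of the section, and the fact that tightness only pins the constant covering multiplicity, not that it equals $1$ --- already shows that disjointness cannot follow from the Parseval property, so no such proof is possible or required. Once disjointness is granted, your decomposition $L^2(Y)=\bigoplus_{a}L^2(a^t(E^*))$ together with the change of variables $m=a^t n$ is exactly the paper's argument, phrased there via the unitary $T_a:f\mapsto f\circ a^t$. So there is no genuine gap in your proposal; the step you flagged as the main obstacle is simply not part of what the theorem asserts.
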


 \begin{proof}   Assume that $(E, L)$ is a spectral pair and $E^*$ is a multipilicative tiling with respect to $\mathcal A^t$ for some $\mathcal A\subseteq \mathcal A ut(\Bbb F_q^d)$.
By an application of the Fourier transform,  to prove that   $ \mathcal W$  is a  Parseval frame for $PW_Y$ is equivalent to say that $\widehat{\mathcal W}:=\{(\sharp E)^{-1/2} \chi_{a^{-1}(l)1_{a^t(E^*)} }:  l\in L, a\in \mathcal A\}$ is a  Parseval frame for $L^2(Y)$.
  By Theorem \ref{P.F} (3)  we know that $\widehat{\mathcal W}$  is a Parseval frame for $L^2(Y)$.    Therefore by    inverse  of  the Fourier transform, which is  unitary, we   conclude that the wavelet system $\mathcal W$  is a Parseval frame for $PW_Y$ and this completes the proof of "$\Leftarrow$".

Now assume that  $\mathcal W$ is a Parseval frame for $L^2(Y)$.   To prove the union of    the sets $a^t(E^*),  \ a\in \mathcal A$, covers $F$, we use a contradiction argument. Assume that   there is a non empty set $M\subsetneq Y$ such that $M\cap a^t(E^*) = \emptyset$ for all $a\in \mathcal A$. Take $f:= 1_M$. Then
  \begin{align}
  \|f\|^2 =(\sharp E)^{-1/2}  \sum_{a\in\mathcal A, l\in L} |\langle f, 1_{a^t(E^*)}\chi_{a^{-1}(l)}\rangle|^2
  \end{align}
  Since $M\cap a^t(E^*)= \emptyset$ for all $a\in \mathcal A$, then the right side in the preceding equation must be  zero, while the left side is $\sharp(M)$. This is a contradiction to our assumption that $M$ is non-empty, therefore $Y= \cup_{a\in \mathcal A} a^t(E^*)$. 
  To show that the pair $(E^*, L)$ is a tight frame spectral pair, note that by the  hypothesis on the disjointness of the sets $ a^t(E^*)$, for any $a\in \mathcal A$ the system $\{\chi_{a^{-1}(l)}:  l\in L\}$ is a tight frame for $L^2(a^t(E^*))$ with the frame bound $(\sharp E)^{-1/2}$. The dilation operator $T_a: L^2(a^t(E^*)) \to L^2(E^*)$ given by  $f\to f\circ a^t$,  $f\circ a^t(x) = f(a^t(x))$,  $x\in E^*$,   is  unitary, therefore   $\{\chi_{l}:  l\in L\}$,    image of $\{\chi_{a^{-1}(l)}:  l\in L\}$ under $T_a$,  is a tight frame for $L^2(E^*)$ with the unified  frame bound and we are done. 
  \end{proof} 
  
 The disjointness of the sets $a^t(E^*), \ a\in \mathcal A$,  in Theorem  \ref{PWF} can be obtained under some additional assumptions on $\mathcal W$.

  \begin{theorem}\label{orthogonal system}
   If the system $\mathcal W$ is an orthogonal basis for $PW_Y$ and $0\in L$, then the sets $a^t(E^*), \ a\in \mathcal A$, are mutual disjoint and $(E^*, L)$ is a  spectral pair: 
   \end{theorem}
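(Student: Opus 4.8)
The plan is to push everything through the Fourier transform and let the hypothesis $0\in L$ do the work. By the opening Lemma of this section applied to $\hat\psi=(\sharp E)^{-1/2}1_{E^*}$, one has $\widehat{\delta_a\tau_l\psi}(m)=(\sharp E)^{-1/2}\,\overline{\chi_{a^{-1}(l)}(m)}\,1_{a^t(E^*)}(m)$, so by the unitarity of $\mathcal F$ (cf.\ Lemma \ref{ONB-FT-ONB}) the family $\mathcal W$ is an orthogonal basis for $PW_Y$ if and only if
\[
\widehat{\mathcal W}=\big\{(\sharp E)^{-1/2}\,\overline{\chi_{a^{-1}(l)}}\,1_{a^t(E^*)}:a\in\mathcal A,\ l\in L\big\}
\]
is an orthogonal basis for $L^2(Y)$. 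As in the converse of Theorem \ref{PWF}, spanning already forces the covering $Y=\bigcup_{a\in\mathcal A}a^t(E^*)$, since a point mass at any uncovered $m_0\in Y$ would be orthogonal to every (support-restricted) element of $\widehat{\mathcal W}$.

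First I would establish disjointness. Because $0\in L$, the basis $\widehat{\mathcal W}$ contains the unmodulated vectors $w_a:=(\sharp E)^{-1/2}1_{a^t(E^*)}$, one for each $a\in\mathcal A$; these are nonzero (as $E^*\neq\emptyset$) and, for $a\neq b$, are distinct members of an orthogonal basis, hence orthogonal. Thus
\[
0=\langle w_a,w_b\rangle=(\sharp E)^{-1}\,\sharp\big(a^t(E^*)\cap b^t(E^*)\big),
\]
which forces $a^t(E^*)\cap b^t(E^*)=\emptyset$. Together with the covering above this yields $Y=\bigsqcup_{a\in\mathcal A}a^t(E^*)$, i.e.\ $E^*$ tiles $Y$ multiplicatively by $\mathcal A^t$, proving the first assertion.

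Next I would upgrade this to the spectral-pair statement. Disjointness gives the orthogonal decomposition $L^2(Y)=\bigoplus_{a\in\mathcal A}L^2(a^t(E^*))$, and for each fixed $a$ the subfamily $\{(\sharp E)^{-1/2}\,\overline{\chi_{a^{-1}(l)}}\,1_{a^t(E^*)}:l\in L\}$ lies entirely in the summand $L^2(a^t(E^*))$; since $\widehat{\mathcal W}$ is an orthogonal basis that splits along this decomposition, each such subfamily must be an orthogonal basis for its summand. I would then transport along the unitary $T_a:L^2(a^t(E^*))\to L^2(E^*)$, $(T_a g)(x)=g(a^t x)$: the transpose identity $(a^{-1})^t a^t=(a^t)^{-1}a^t=I$ gives $(a^{-1}l)\cdot(a^t x)=l\cdot x$, hence $\chi_{a^{-1}(l)}(a^t x)=\chi_l(x)$, so $T_a$ carries the subfamily onto $\{(\sharp E)^{-1/2}\,\overline{\chi_l}\,1_{E^*}:l\in L\}$. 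Therefore $\{\chi_l\,1_{E^*}:l\in L\}$ is an orthogonal basis for $L^2(E^*)$, and after normalization $(E^*,L)$ is a spectral pair.

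The argument is short, and the only real subtlety is the very last point: here the orthogonal-basis hypothesis on $\mathcal W$ (rather than the mere Parseval-frame hypothesis of Theorem \ref{PWF}) is exactly what promotes each fixed-$a$ subfamily from a tight frame to a genuine orthogonal basis, and thereby $(E^*,L)$ from a tight frame spectral pair to an honest spectral pair. The remaining care is purely clerical—checking that the $l=0$ vectors are distinct basis members and verifying the transpose identity underlying $\chi_{a^{-1}(l)}(a^t x)=\chi_l(x)$.
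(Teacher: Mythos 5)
Your proposal is correct and follows essentially the same route as the paper's proof: the hypothesis $0\in L$ supplies the unmodulated indicators $1_{a^t(E^*)}$, whose pairwise orthogonality forces disjointness of the supports, and the spectral-pair claim is then obtained by restricting the orthogonal basis to each summand $L^2(a^t(E^*))$ and transporting by the unitary $T_a$. You merely fill in details the paper leaves implicit (the covering of $Y$ and the fact that a basis splitting along an orthogonal decomposition restricts to a basis of each summand), which is fine.
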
 
  
  \begin{proof} 
  Now assume that  $\mathcal W$ is an orthogonal basis and $0\in L$.   Take $l=0$. Then the functions in the  family  
  $\{1_{a^t(E^*)}: a\in \mathcal A\}$ are orthogonal and 
 $\sharp (a_1^t(E^*) \cap  a_2(E^*))=0$ for any distinct $a_1$ and $a_2$. From the other side, the exponentials  $\{\chi_{a^{-1}(l)}:  l\in L\}$  are orthogonal basis for $L^2(a^t(E^*))$.  By a similar argument as above,  we conclude that  the family  $\{\chi_l:  l\in L\}$  is also an orthogonal basis for  $L^2(E^*)$. This completes the proof of the theorem.   
 \end{proof}
 
 {\color{red} Question:  Must $0\in E^c$ when the system is an orthogonal system? } 
 
%
We conclude this section with an example of a tight wavelet frame  associated to a set $E$ and automorphisms $\mathcal A$,    where the sets  $a^t(E)$, $a\in \mathcal A$, are not necessarily disjoint. Let $\mathcal W$ be a tight wavelet   frame for $PW_Y$ with frame bound $A$,  where $\hat\psi= 1_E$. Take $\mathcal W_1= \mathcal W\cup \mathcal W$. Then $\mathcal W_1$ is a tight frame  for $PW_Y$ with the frame  bound $A/2$ and the sets $a^t(E^*)$ are not disjoint. 

 \section{Existence of Multiplicative  tiling sets  in  ${\mathbb F}_q^d$}\label{Existence of wavelet sets}
  Let $q$ be an odd prime and $d\geq 1$. In this section we shall prove the existence of non-trivial (non-trivial here simply  means that the set $E$ is neither the whole space nor one point) multiplicative tiling sets in the finite vector space $\Bbb F_q^d$ when $q\equiv 3$ (mod 4). 

  \medskip

When $d=1$, non-trivial multiplicative tiling set on ${\mathbb F}_q$, $q$ odd, exists. Notice that ${\mathbb F}_q$ can be identified as $\{\frac{-(q-1)}{2},...,-1,0,1,...,\frac{q-1}{2}\}$ and $q-1$ is an even number. Define the automorphisms $\alpha_1$ and $\alpha_2$ to be $\alpha_1(x) = x$ and $\alpha_2(x) = -x$. Take  $E = \{1,...,(q-1)/2\}$. Then we immediately  see that
$$
{\mathbb F}_q\setminus\{0\} = \alpha_1(E)\dot\cup\alpha_2(E).
$$
The union is disjoint and this yields naturally a non-trivial multiplicative tiling. If $d>1$,  it is not immediately clear that why non-trivial multiplicative tiling sets  exist. We first prove this for  $d=2$.

\medskip

Let us first consider the problem of multiplicative tiles on ${\mathbb R}^2$ and gain some motivations. Indeed, if we define $R_{\theta}$ be the rotation matrix of angle $\theta$ and $E_p$ be the sector without the origin with aperture $2\pi/p$, then  ${\mathbb R}^2\setminus\{0\}$ is naturally partitioned into
$$
{\mathbb R}^2\setminus\{0\} = \bigcup_{k=0}^{p-1}R_{2\pi k/p}(E_p).
$$
We can make the set compact by considering annulus of sectors with inner and outer radii equal to  $1$ and $2$, respectively, and taking also dilation matrices into account. However, sectors can never be a translational tile and hence we cannot produce wavelet sets on ${\mathbb R}^2$ using sectors. Nonetheless, we will see our construction of wavelet sets are produced by rotation on the finite field and it can also be a translational tile on $\Bbb F_q^2$. 

\medskip

For $r\in {\mathbb F}_q$,  we consider the circle $S_r$ on ${\mathbb F}_q^2$ with radius $r$ as follows:

  $$
S_r: = \left\{\left(
           \begin{array}{c}
             x \\
             y \\
           \end{array}
         \right)\in{\mathbb F}_q^2: \ x^2+y^2 = r.
\right\}
$$

We recall that $a$ is a {\it quadratic residue} (mod $q$) if $x^2\equiv a$ (mod $q$) has a solution in ${\mathbb F}_p$, otherwise, it is called a {\it quadratic non-residue}. On ${\mathbb F}_q$, there exist exactly $(q-1)/2$ of non-zero quadratic residue  and $(q-1)/2$ are quadratic non-residue. By studying the tiling properties of the quadratic residues, we have the following lemma:

\begin{lemma} Let $q\equiv3$  (mod 4). Then

$$
\#S_r= \left\{
         \begin{array}{ll}
         1, & \hbox{if $r=0$;} \\
           q+1, & \hbox{if  $r\neq 0$} \\
           \end{array}
       \right.
 $$
\end{lemma}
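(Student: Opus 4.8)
The plan is to read $S_r$ as a fiber of the field norm $N\colon \mathbb{F}_{q^2}\to\mathbb{F}_q$. The decisive input is the hypothesis $q\equiv 3\pmod 4$: by Euler's criterion $(-1)^{(q-1)/2}=-1$, so $-1$ is a quadratic non-residue in $\mathbb{F}_q$, the polynomial $t^2+1$ is irreducible, and we may realize $\mathbb{F}_{q^2}=\mathbb{F}_q(i)$ with $i^2=-1$. First I would set up the bijection $(x,y)\leftrightarrow z=x+iy$ between $\mathbb{F}_q^2$ and $\mathbb{F}_{q^2}$, under which the conjugate of $z$ is $\bar z=x-iy=z^{q}$ and
$$
x^2+y^2=(x+iy)(x-iy)=z\,z^{q}=N(z).
$$
Hence $S_r=N^{-1}(r)$, and counting $S_r$ becomes counting the fiber of the norm over $r$.

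For $r=0$: since $\mathbb{F}_{q^2}$ is a field it has no zero divisors, so $N(z)=z\,z^{q}=0$ forces $z=0$, i.e. $(x,y)=(0,0)$; thus $\#S_0=1$. For $r\neq 0$: the norm restricts to a homomorphism $\mathbb{F}_{q^2}^\times\to\mathbb{F}_q^\times$ of cyclic groups given by $z\mapsto z^{q+1}$. Computing the size of its image as $(q^2-1)/\gcd(q^2-1,\,q+1)=q-1=|\mathbb{F}_q^\times|$ shows the norm is surjective onto $\mathbb{F}_q^\times$, and every nonzero fiber then has the same cardinality $(q^2-1)/(q-1)=q+1$. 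Therefore $\#S_r=q+1$ for each $r\neq 0$, which is the claim.

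The only point requiring care — the \emph{main obstacle}, such as it is — is justifying that the norm is surjective with all fibers of equal cardinality; this is a routine order computation in the cyclic group $\mathbb{F}_{q^2}^\times$, but it is where the hypothesis re-enters implicitly through the very structure of $\mathbb{F}_{q^2}$. As an alternative that avoids passing to $\mathbb{F}_{q^2}$, I could count directly with the quadratic character $\eta$: writing $\#\{x:x^2=a\}=1+\eta(a)$ (with the convention $\eta(0)=0$), one expands $\#S_r=\sum_{a}(1+\eta(a))(1+\eta(r-a))$ and reduces everything to the single quadratic character sum $\sum_a\eta\!\left(a(r-a)\right)$. For $r\neq 0$ the standard evaluation of a quadratic-polynomial character sum gives $-\eta(-1)=1$ (using $\eta(-1)=-1$), yielding $\#S_r=q+1$; for $r=0$ the same sum equals $(q-1)\eta(-1)=-(q-1)$, yielding $\#S_0=1$, in agreement with the norm-map computation.
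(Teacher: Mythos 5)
Your proof is correct, but it takes a genuinely different route from the paper. The paper's proof is combinatorial: it invokes Theorem~1.2 of Monico--Elia on the number of ways to write a residue or non-residue as a sum of two quadratic residues (each such representation $r=a_1+a_2$ contributing four points $(\pm x,\pm y)$, plus four axis points when $r$ is a square), arrives at $4d_q=q+1$ with $d_q=(q+1)/4$, and then gets $\#S_0=1$ by subtracting from $q^2$. You instead identify $\mathbb{F}_q^2$ with $\mathbb{F}_{q^2}=\mathbb{F}_q(i)$ (legitimate precisely because $q\equiv 3\pmod 4$ makes $-1$ a non-residue) and read $x^2+y^2$ as the norm $N(z)=z^{q+1}$; the fiber count then falls out of a routine order computation in the cyclic group $\mathbb{F}_{q^2}^\times$, with $\#S_0=1$ coming for free from the absence of zero divisors. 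Your argument is self-contained, avoids the external citation, and has the added virtue of being structurally aligned with the paper's own Lemma~3.2, which uses exactly the same identification $\left(\begin{smallmatrix}x\\y\end{smallmatrix}\right)\leftrightarrow x+iy$ to produce the rotation $R$ generating each circle --- indeed your norm-fiber description makes the transitivity of $S_1$ on each $S_r$ transparent. Your alternative via the quadratic character sum $\sum_a\eta(a(r-a))$ is also correct (the discriminant $r^2$ is nonzero for $r\neq0$, giving $-\eta(-1)=1$, and the degenerate case gives $(q-1)\eta(-1)=-(q-1)$) and is closer in spirit to the paper's counting, though it replaces the Monico--Elia input with the standard evaluation of a quadratic-polynomial character sum. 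What the paper's approach buys is an explicit link to the additive combinatorics of quadratic residues, which it reuses in Section~4; what yours buys is brevity and independence from that reference.
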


\begin{proof}
By Theorem 1.2 in \cite{ME}, every quadratic residue [non-residue] can be written as a sum of two quadratic residues [non-residues] in exactly $d_q-1$ ways, and every quadratic residue [non-residue] can be written as a sum of two quadratic non-residues [residues] in exactly $d_q$ ways, where
$$
d_q =  \frac{q+1}{4} , \quad  \text{when}  \quad q\equiv3  \  (mod \ 4)
$$

 Suppose that $r$ is a quadratic residue mod $q$. Then each sum of $r$ as quadratic residues $a_1+a_2$ induces four points in $S_r$. Indeed, there are $a_1= x^2$ has two solutions $x_1, x_2$ and $a_2 = x^2$ has two solutions $y_1, y_2$. Thus there are four distinct points $(x_1,y_1)$, $(x_1,y_2)$, $(x_2,y_1)$ and $(x_2,y_2)$.
 Furthermore, as $r = x^2$ also has two solutions $z_1,z_2$. It induces 4 more solutions $(z_1,0)$, $(z_2,0)$, $(0,z_1)$ and $(0,z_2)$ on the axes.
 Hence, by the theorem,  when $q\equiv3$ (mod 4) we have
\begin{equation}\label{eq1}
\#S_r = 4(d_q-1) +4 = 4d_q =
          q+1.
\end{equation}
 Suppose that $r$ is a quadratic non-residue of $q$. Then  $r$ can be written  as exactly $d_q$ ways as sum of quadratic residues. As each sum induces 4 pairs, $\#S_r = 4d_q$ which is the same answer as in  (\ref{eq1}).

 Finally, it follows directly that
$$
\#S_0 = q^2-\sum_{r=1}^{q-1}\#S_r =q^2-(q-1)(4d_q) =
  1 ,
$$
as required.
 \end{proof}

 In the following lemma    we prove that there exists  an orthogonal matrix such that the multiplication of its exponents with a vector in a circle $S_r$, $r\neq 0$, generates the circle.

 \begin{lemma}\label{lem2} Suppose  that $q\equiv 3$ (mod 4).
 Then there exists an orthogonal matrix
$$
R = \left(
  \begin{array}{cc}
    a & -b \\
    b & a \\
  \end{array}
\right)
$$
such that $a^2+b^2=1$ (mod $q$), $R^{q+1}=I$,  and for any ${\bf e}\in S_r$, $R{\bf e},R^2{\bf e},...,R^{\#S_r}{\bf e}$ generates $S_r$ for all  $r\ne 0$.
\end{lemma}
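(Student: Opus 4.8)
The plan is to realise the group of ``rotation'' matrices $\begin{pmatrix} a & -b \\ b & a \end{pmatrix}$ with $a^2+b^2=1$ as a concrete cyclic group and then run a transitivity argument against the count $\#S_r=q+1$ from the previous lemma. The key observation is that since $q\equiv 3\ (\mathrm{mod}\ 4)$, $-1$ is a quadratic non-residue in $\Bbb F_q$, so $\Bbb F_q[i]$ with $i^2=-1$ is the field $\Bbb F_{q^2}$. Under the correspondence $a+bi \leftrightarrow \begin{pmatrix} a & -b \\ b & a \end{pmatrix}$ matrix multiplication becomes multiplication in $\Bbb F_{q^2}$, and the vector ${\bf e}=(x,y)^t$ corresponds to $x+yi$, so that the matrix action is exactly complex multiplication. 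Moreover the norm $N\colon \Bbb F_{q^2}\to \Bbb F_q$, $N(z)=z\cdot z^q$, satisfies $N(a+bi)=a^2+b^2$, using that $i^q=-i$ (which holds precisely because $(q-1)/2$ is odd when $q\equiv 3\ (\mathrm{mod}\ 4)$). Thus the matrices with $a^2+b^2=1$ correspond exactly to the norm-one subgroup $G=\{z: N(z)=1\}$, and the circle $S_r$ corresponds to the norm sphere $\{z: N(z)=r\}$.

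First I would record that $G=\{z\in \Bbb F_{q^2}^\ast : z^{q+1}=1\}$ is a subgroup of the cyclic group $\Bbb F_{q^2}^\ast$ of order $q^2-1=(q-1)(q+1)$; since $q+1\mid q^2-1$, the subgroup $G$ has order exactly $q+1$ and is itself cyclic. I would then take $R$ to be the matrix corresponding to a generator of $G$. This immediately supplies the first two requirements: $a^2+b^2=1$ and $R^{q+1}=I$; and because $R$ has order exactly $q+1$, the powers $R,R^2,\dots,R^{q+1}$ are precisely the distinct elements of $G$.

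For the orbit statement, I would show the action of $G$ on $S_r$ (for $r\neq 0$) is free, i.e.\ no nonidentity rotation fixes a nonzero vector. One first checks $G$ preserves $S_r$: orthogonality gives $|R{\bf e}|^2=(a^2+b^2)|{\bf e}|^2=r$. For freeness, the cleanest route is a determinant computation: if $R^k=\begin{pmatrix} c & -d \\ d & c \end{pmatrix}$, then $\det(R^k-I)=(c-1)^2+d^2=2(1-c)$ using $c^2+d^2=1$; since $q$ is odd this vanishes only when $c=1$, forcing $d=0$ and hence $R^k=I$. (Equivalently, in the field picture $uz=z$ with $z\neq 0$ forces $u=1$.) Since $G$ acts freely on $S_r$ and $\#S_r=q+1=|G|$ by the preceding lemma, every orbit has size $q+1$, so $S_r$ is a single orbit; therefore $\{R{\bf e},R^2{\bf e},\dots,R^{q+1}{\bf e}\}=S_r$ for every ${\bf e}\in S_r$.

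The main obstacle, and the conceptual heart of the argument, is the identification of the rotation group with the norm-one subgroup of $\Bbb F_{q^2}^\ast$ together with the verification that this subgroup is cyclic of order exactly $q+1$; the hypothesis $q\equiv 3\ (\mathrm{mod}\ 4)$ enters precisely here, guaranteeing both that $\Bbb F_q[i]$ is a field and that $i^q=-i$. Once this structure is in place, freeness and simple transitivity are routine, and the count $\#S_r=q+1$ finishes the proof.
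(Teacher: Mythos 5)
Your proposal is correct and follows essentially the same route as the paper: both identify $\Bbb F_q^2$ with $\Bbb F_{q^2}$ via $i^2=-1$ (using $q\equiv 3 \pmod 4$), realize $R$ as multiplication by a generator of the norm-one circle group $S_1$ of order $q+1$, and conclude simple transitivity on each $S_r$ from the count $\#S_r=q+1$. The only (cosmetic) differences are that you obtain $|S_1|=q+1$ group-theoretically as the $(q+1)$-torsion of the cyclic group $\Bbb F_{q^2}^\ast$ and phrase transitivity via a free action and orbit counting, whereas the paper cites the point count for $S_1$ and writes $S_r$ directly as the coset $S_1\cdot{\bf e}$.
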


\begin{proof} Since $q\equiv 3$ (mod 4), then $-1$ is not a square in ${\mathbb F}_q$.  In particular, we define $i$ to be the imaginary solution of $i^2=-1$ (mod q). Thus, we can identify $\left(
                                                           \begin{array}{c}
                                                             x \\
                                                             y \\
                                                           \end{array}
                                                         \right)$ in ${\mathbb F}_q$ as $x+yi$.
In this sequel, ${\mathbb F}_q^2$ is isomorphic to the finite field of $q^2$ elements, denoted as ${\mathbb F}_{q^2}$. Note that the multiplicative group ${\mathbb F}_{q^2}^{\times}$ is a cyclic group and the circle
$$
S_1  = \{x+yi: x^2+y^2=1\}
$$
is a subgroup of ${\mathbb F}_{q^2}^{\times}$. As $\#S_1 = q+1$,  there exists $a+bi\in S_1$ such that   $S_1 = \{1, a+bi, (a+bi)^2,...,(a+bi)^q\}$. In other words, $a+bi$ generates the group $S_1$.

Then it follows that for any ${\bf e}\in S_r$ we can write ${\bf e} = c+di$,  and we have
$$
S_r = \left\{c+di, (a+bi)(c+di),...,(a+bi)^q (c+di)\right\}.
$$
 Define the matrix $R$ by $ \left(
  \begin{array}{cc}
    a & -b \\
    b & a \\
  \end{array}
\right).$ Observe that  $R{\bf e} = (ac-bd, \ bc+ad)^T$ in ${\mathbb F}_{q}^2$ is equal to $(ac-bd)+(ad+bc)i = (a+bi)(c+di)$ in $\Bbb F_{q^2}$. Then we have
$$
S_r = \left\{{\bf e},R{\bf e},...,R^q{\bf e}\right\} .
$$
This completes the proof of the lemma.

\end{proof}

Our next result   proves the existence of  non-trivial  multiplicative tiling sets  ${\mathbb F}_q^d$ for $d> 1$.

\medskip

\begin{theorem}\label{theorem_multiplicative}
There exists multiplicative tiling set  $E$ in ${\mathbb F}_q^d$ for $q\equiv 3$ (mod 4).
\end{theorem}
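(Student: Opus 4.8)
The plan is to realize the punctured space $Y=(\mathbb{F}_q^d)^{*}$ as a disjoint union of orbits of a well-chosen \emph{abelian} group of automorphisms, and then take one point from each orbit to form $E$. The guiding observation is that multiplication by a nonzero element of a field is an $\mathbb{F}_q$-linear bijection, hence an automorphism; consequently any subgroup of the multiplicative group of a field acts on the nonzero vectors by automorphisms and partitions them into cosets of equal size. A transversal of these cosets is then forced to be a multiplicative tiling set.

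For $d=2$ I would run this scheme through the two preceding lemmas. By the computation of $\#S_r$ above, $Y$ is the disjoint union of the $q-1$ circles $S_r$, $r\in\mathbb{F}_q^{\times}$, each of cardinality $q+1$. By Lemma \ref{lem2} the orthogonal matrix $R$ has order $q+1$ and, for every $r\neq 0$ and every $\mathbf{e}\in S_r$, the orbit $\{R^{k}\mathbf{e}: 0\le k\le q\}$ equals $S_r$; since $\#S_r=q+1=\#\langle R\rangle$, the cyclic group $\mathcal{A}:=\langle R\rangle=\{I,R,\dots,R^{q}\}$ acts freely and transitively on each circle. I would then pick one representative $\mathbf{e}_r\in S_r$ for each $r\neq 0$ and set $E:=\{\mathbf{e}_r: r\in\mathbb{F}_q^{\times}\}$. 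For fixed $k$ the points $R^{k}\mathbf{e}_r$ lie on distinct circles, and if $R^{k_1}\mathbf{e}_{r_1}=R^{k_2}\mathbf{e}_{r_2}$ then $R$-invariance of the form $x^2+y^2$ forces $r_1=r_2$ and then $k_1=k_2$; hence $\bigcup_{k=0}^{q}R^{k}(E)=\bigcup_{r\neq 0}S_r=Y$ is a disjoint union. Thus $E$ is a multiplicative tiling set with $\#E=q-1$, which is non-trivial for $q\ge 3$.

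For general $d$ I would replace the circles by the level sets of the field norm. Identify $\mathbb{F}_q^d$ with the field $\mathbb{F}_{q^d}$ as $\mathbb{F}_q$-vector spaces, so that $Y=\mathbb{F}_{q^d}^{\times}$ and multiplication by any $\beta\in\mathbb{F}_{q^d}^{\times}$ is an element of $\mathrm{Aut}(\mathbb{F}_q^d)$. Let $N\colon\mathbb{F}_{q^d}^{\times}\to\mathbb{F}_q^{\times}$ be the norm; it is a surjective homomorphism, so $H:=\ker N$ is a cyclic subgroup of order $(q^d-1)/(q-1)$ whose cosets are exactly the $q-1$ level sets $N^{-1}(c)$, $c\in\mathbb{F}_q^{\times}$. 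Taking $\mathcal{A}:=\{z\mapsto hz: h\in H\}$ and $E$ a transversal of these cosets (one point $e_c$ with $N(e_c)=c$), the coset decomposition gives $\bigsqcup_{h\in H}h\cdot E=\bigsqcup_{c}e_cH=Y$, a disjoint union, so $E$ is again a multiplicative tiling set with $\#E=q-1>1$. The case $d=2$ is recovered verbatim, with $H=S_1=\langle R\rangle$ and the cosets of $H$ being precisely the circles $S_r$.

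The genuine difficulty has already been absorbed into Lemma \ref{lem2}: exhibiting a single rotation $R$ whose powers sweep out an entire circle, which is what guarantees that every orbit attains the maximal size $q+1$ and hence that a one-point-per-circle transversal tiles. In the field formulation the analogous (but now standard) inputs are that $\mathbb{F}_{q^d}^{\times}$ is cyclic and that the norm is surjective; granted these, the tiling is immediate from freeness of the group action. I would note that the hypothesis $q\equiv 3\ (\mathrm{mod}\ 4)$ is used only for the two-dimensional identification $\mathbb{F}_q^2\cong\mathbb{F}_{q^2}$ via $x+yi$ with $i^2=-1$, which needs $-1$ to be a non-square; the higher-dimensional argument requires nothing beyond the existence of the field $\mathbb{F}_{q^d}$, so the main obstacle is not analytic but simply the bookkeeping that links the geometric circle picture to the algebraic coset picture.
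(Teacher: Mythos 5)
Your proposal is correct. For $d=2$ it is essentially the paper's own argument: invoke Lemma \ref{lem2} to see that $\langle R\rangle$ acts transitively (hence, by the count $\#S_r=q+1=\mathrm{ord}(R)$, freely) on each circle $S_r$, $r\neq 0$, and take a transversal $E$ consisting of one point per circle; your extra sentence justifying disjointness via $R$-invariance of $x^2+y^2$ is a welcome explicit version of what the paper leaves implicit. For $d>2$ you genuinely diverge: the paper simply takes the product $\widetilde E=E\times\mathbb{F}_q^{d-2}$ with the block-diagonal automorphisms $\widetilde R^j$, whereas you identify $\mathbb{F}_q^d$ with $\mathbb{F}_{q^d}$ and take a transversal of the cosets of $H=\ker N$, where $N$ is the field norm, acted on by multiplication by $H$. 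Both are valid. Your norm construction is more uniform and, as you observe, needs no hypothesis on $q$ at all (it works for every prime power and every $d$, including $d=2$ with $q\equiv 1$ (mod $4$), where the norm form is simply not $x^2+y^2$); it also produces a tiling set of the fixed size $q-1$ with a large cyclic automorphism group of order $(q^d-1)/(q-1)$. What the paper's product construction buys instead is compatibility with the rest of the paper: $\widetilde E=E\times\mathbb{F}_q^{d-2}$ has cardinality $(q-1)q^{d-2}$ and, after adjoining the origin slice, remains a translational tile with a spectrum (Theorem \ref{Thm:admissible sets}), which is what is needed to build the tight wavelet frame sets; a bare norm transversal of size $q-1$ does not obviously carry that additional structure. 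So your argument proves this theorem (indeed a stronger statement), but the paper's choice of construction is driven by the later application.
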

\begin{proof}
We first consider $d=2$. Take the automorphisms to be $\{I,R,R^2,...,R^q\}$ where $R$ is defined in Lemma \ref{lem2}. Define the set $E$ by taking one point from each $S_r$, for $r\ne 0$. Then Lemma \ref{lem2} shows that ${\mathbb F}_q^2\setminus \{ 0\} = E\cup R(E)\cup...\cup R^{q}(E)$ and the unions are disjoint since each $R^j(E)$ intersects $S_r$ exactly once.


If $d>2$, we may take $\widetilde E:=E\times {\mathbb F}_q^{d-2}$ and  $\widetilde R := \left(
                                                                                    \begin{array}{cc}
                                                                                      R & O \\
                                                                                      O & I \\
                                                                                    \end{array}
                                                                                  \right), 
$
where $I$ is $(d-1)\times(d-2)$ identity matrix. 
Then $\widetilde E$ is a multiplicative tiling set  associated to the automorphisms $\{\widetilde R^j: \  1\leq j\leq q+1\}$.
\end{proof}

We will call the multiplicative tiling in Proposition \ref{theorem_multiplicative} {\it rotational tiling}.
Therefore, any rotational tiling is a set of $q-1$ elements which has only one vector in the  intersection with any circle of non-zero radius. In the sequel, we will consider the wavelet sets  originated by the rotational tilings.

\section{Construction of tight wavelet frame sets}\label{existence of admissible sets}

  As we will see in this section, the  construction of  tight wavelet frame   sets on ${\mathbb F}_q^2$ requires us to find a set $E$ such that $0\in E$, $E^*$  is a multiplicative  tiling and
 $E$ tiles $\mathbb F_q^2$ by translations.    Here we consider rotational tilings.
 We recall the following characterization of  translational tiles on ${\mathbb F}_p^2$.

\medskip

\begin{theorem}\cite{IMP15}\label{ThIMP15}
Let $E$ be a set that tiles ${\mathbb F}^2_q$ by translation. Then $\sharp E = 1, q$  or $q^2$ and $E$ is a graph if
$\sharp E = q$, i.e.
$$
E= \{x{\bf e}_1+f(x){\bf e}_2: \  x\in {\mathbb F}_q\}
$$
for some basis ${\bf e}_1,{\bf e}_2$ in ${\mathbb F}_q^2$ and function $f:{\mathbb F}_q\rightarrow{\mathbb F}_q$ .
\end{theorem}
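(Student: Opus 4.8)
The plan is to separate the statement into the cardinality claim and the graph structure, and to treat them in that order.

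For the cardinalities: if $E$ tiles $\Bbb F_q^2$ by a translation set $\Lambda$, then every point of $\Bbb F_q^2$ is written uniquely as $e+\lambda$ with $e\in E$, $\lambda\in\Lambda$, so $\sharp E\cdot\sharp\Lambda=q^2$. As $q$ is prime, the divisors of $q^2$ are $1,q,q^2$, hence $\sharp E\in\{1,q,q^2\}$. The values $\sharp E=1$ and $\sharp E=q^2$ are the trivial endpoints (a point, resp.\ the whole space), so the real content is the case $\sharp E=q$, to which I would devote the rest.

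For $\sharp E=q$, I would first reduce the graph property to the existence of a single nonzero frequency killing $\widehat{1_E}$: I claim that $E$ is a graph as soon as there is some $\xi\neq 0$ with $\sum_{e\in E}\chi_\xi(e)=0$ (equivalently $\widehat{1_E}(\xi)=0$). Such a $\xi$ is produced by the tiling relation $1_E*1_\Lambda\equiv 1$: taking Fourier transforms forces $\widehat{1_E}(\xi)\,\widehat{1_\Lambda}(\xi)=0$ for every $\xi\neq 0$, and if $\widehat{1_E}$ had no nonzero zero then $\widehat{1_\Lambda}$ would vanish on all of $\Bbb F_q^2\setminus\{0\}$, forcing $\Lambda=\Bbb F_q^2$ against $\sharp\Lambda=q$. (Alternatively, by Theorem \ref{Fuglede Conj. holds} the tile $E$ is spectral with a spectrum $L$, $\sharp L=q\ge 2$, and orthogonality of $\chi_l,\chi_{l'}$ over $E$ yields $\widehat{1_E}(l-l')=0$ for distinct $l,l'\in L$.) Either route supplies the desired $\xi$.

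The heart of the argument is extracting the graph structure from $\sum_{e\in E}\chi_\xi(e)=0$. Writing $\zeta=e^{2\pi i/q}$ and sorting the points of $E$ by the value $j=\xi\cdot e$ of the nonzero linear functional $x\mapsto\xi\cdot x$, the character sum becomes $\sum_{j=0}^{q-1}c_j\,\zeta^{\,j}=0$, where $c_j=\sharp\{e\in E:\xi\cdot e=j\}$ counts the points of $E$ on the $j$-th level line. Since $q$ is prime, the classical description of vanishing sums of $q$-th roots of unity (every such sum with nonnegative integer coefficients has all its coefficients equal) forces $c_0=\dots=c_{q-1}$; together with $\sum_j c_j=\sharp E=q$ this gives $c_j=1$ for all $j$. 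Hence $E$ meets each of the $q$ parallel lines $\{x:\xi\cdot x=j\}$ exactly once; these are the cosets of the line $\langle\mathbf e_2\rangle:=\{x:\xi\cdot x=0\}$, and choosing $\mathbf e_1$ to complete a basis with $\mathbf e_2$ identifies the cosets with the values $x\in\Bbb F_q$ of the $\mathbf e_1$-coordinate, so $E=\{x\mathbf e_1+f(x)\mathbf e_2:x\in\Bbb F_q\}$ for a well-defined $f$, i.e.\ $E$ is a graph.

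The counting for the cardinalities and the Fourier/level-set bookkeeping are routine; the step bearing the real weight is the passage from one vanishing character sum to equal multiplicities $c_j$. This is exactly where primality of $q$ is used, and I expect it to be the main obstacle to state cleanly — it is also the precise point at which the proof would break for $q$ a proper prime power, where vanishing sums of roots of unity need not have equal coefficients.
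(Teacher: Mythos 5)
The paper states this theorem without proof, citing \cite{IMP15}, so there is no in-text argument to compare against; judged on its own, your proof is correct and is essentially the standard one (the divisibility count for the cardinalities, a nonzero zero of $\widehat{1_E}$ extracted from $1_E*1_\Lambda\equiv 1$, and the irreducibility of the $q$-th cyclotomic polynomial forcing equal level-set counts $c_j$, hence one point per line). Your closing remark correctly identifies the primality of $q$ as the load-bearing hypothesis at the roots-of-unity step.
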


Note that,  by our construction  in Proposition \ref{theorem_multiplicative},  a  rotational tiling set   has  $q-1$ elements. Therefore,  by the classification of translational tiles in Theorem \ref{ThIMP15}, to construct tight wavelet frame sets,  we must consider  graph of functions defined on $\mathbb F_q$ that are simultaneously rotational tiling sets. We will provide a systematic way to construct such sets as  we prove   Theorem \ref{Thm:admissible sets}.   First we need some key lemmas.

 \medskip

\begin{lemma}\label{k} There exists $k\in  \mathbb F_q$, $0< k\leq  \frac{q-1}{2}$ such that $1+k^2$ is a quadratic non-residue.
\end{lemma}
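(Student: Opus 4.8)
The plan is to reduce the statement to a single character-sum identity and then settle that identity by counting points on a hyperbola. First I would record the basic consequence of $q\equiv 3\pmod 4$: the element $-1$ is a quadratic non-residue, so $1+k^2=0$ has no solution and $1+k^2$ is always a nonzero element of $\mathbb{F}_q$. Writing $\chi$ for the quadratic (Legendre) character with the convention $\chi(0)=0$, the value $\chi(1+k^2)$ is therefore always $\pm 1$, and the assertion ``$1+k^2$ is a quadratic non-residue'' is exactly ``$\chi(1+k^2)=-1$''. Since $k$ and $-k$ give the same $1+k^2$, as $k$ ranges over $1,\dots,(q-1)/2$ the squares $k^2$ run over each nonzero quadratic residue exactly once; so it is enough to exhibit one such $k$ with $\chi(1+k^2)=-1$.

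The heart of the argument is to evaluate $S:=\sum_{x\in\mathbb{F}_q}\chi(x^2+1)$. I would do this by counting $N:=\#\{(x,y)\in\mathbb{F}_q^2: y^2=x^2+1\}$ in two ways. On one hand, the number of $y$ with $y^2=c$ equals $1+\chi(c)$ for every $c$ (including $c=0$), so summing over $x$ gives $N=q+S$. On the other hand, $y^2=x^2+1$ rewrites as $(y-x)(y+x)=1$; since $2$ is invertible in $\mathbb{F}_q$, the map $(x,y)\mapsto(y-x,\,y+x)$ is a bijection of $\mathbb{F}_q^2$, and the condition becomes $uw=1$, which has exactly $q-1$ solutions (one $w=u^{-1}$ for each $u\neq 0$). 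Hence $N=q-1$, and comparing the two counts yields $S=-1$. (Alternatively one could quote the standard evaluation of $\sum_x\chi(ax^2+bx+c)$ via its discriminant, but the point-counting route is self-contained.)

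To finish I would split $S$ using the same pairing: the term $x=0$ contributes $\chi(1)=1$, while the remaining terms combine as $x,-x$, so $S=1+2\sum_{k=1}^{(q-1)/2}\chi(1+k^2)$. Together with $S=-1$ this gives $\sum_{k=1}^{(q-1)/2}\chi(1+k^2)=-1$. Letting $N_-$ and $N_+$ count the $k$ in this range with $\chi(1+k^2)=-1$ and $=+1$ respectively, we have $N_++N_-=(q-1)/2$ and $N_+-N_-=-1$, whence $N_-=\frac{q+1}{4}$. Because $q\equiv 3\pmod 4$ this is a positive integer, so $N_-\geq 1$ and the required $k$ exists; in fact there are exactly $d_q=\frac{q+1}{4}$ of them, matching the count $d_q$ already appearing in the computation of $\#S_r$.

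As for the main obstacle: the only nontrivial ingredient is the evaluation $S=-1$, and essentially all the difficulty is concentrated there. I expect the factorization $(y-x)(y+x)=1$ to be the cleanest way to obtain it, after which the conclusion is pure bookkeeping with the Legendre symbol and the involution $k\leftrightarrow -k$.
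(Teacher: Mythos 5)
Your proof is correct, but it takes a genuinely different route from the paper. The paper argues by pigeonhole: assuming no such $k$ exists, the $(q-1)/2$ values $1+k^2$ for $0<k\leq\frac{q-1}{2}$ are pairwise distinct quadratic residues (distinctness because $1+k^2=1+k'^2$ forces $k=\pm k'$), hence they exhaust all of $QR$; but $1\in QR$ would then equal $1+k^2$ for some $k$ in the range, forcing $k=0$, a contradiction. That argument is shorter and uses nothing beyond counting, though it silently relies on $q\equiv 3\pmod 4$ to guarantee $1+k^2\neq 0$ (so that ``not a non-residue'' really means ``residue''), a point you handle explicitly. Your character-sum evaluation $\sum_x\chi(x^2+1)=-1$ via the bijection $(x,y)\mapsto(y-x,y+x)$ onto the hyperbola $uw=1$ is correct and buys strictly more: it yields the exact count $N_-=\frac{q+1}{4}$ of valid $k$, which is precisely the quantity $d_q$ appearing in the paper's computation of $\#S_r$ (quoted there from Monico--Elia), so your argument could in principle make that earlier lemma self-contained as well. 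The trade-off is that you invoke the multiplicativity of the Legendre symbol and a point count where the paper needs only a cardinality comparison; both proofs use $q\equiv 3\pmod 4$ only to ensure $1+k^2$ never vanishes.
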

\begin{proof} Assume that such $k$ dose not exists. This means that $1+k^2$ are  quadratic residues for all $0< k\leq  \frac{q-1}{2}$.  We note that all $1+k^2$ are in distinct residue classes (mod $q$), otherwise $1+k^2\equiv 1+k'^2$ would imply $k = k'$ or $k= -k'\equiv q-k'$ (mod $q$). The latter is not possible since $0<k,k'\le \frac{q-1}{2}$.
Since $\sharp QR=\frac{q-1}{2}$ and $1+k^2$ are distinct for different  $k$, then we must have  $QR= \{1+k^2: 0< k\leq  \frac{q-1}{2}\}$.  However, we also know that  $1\in QR$. Then   for some $k$  we have  $1+k^2\equiv 1$ (mod q). This implies that $k=0$ or $k=q$, which contradicts the assumption.
\end{proof}

As an example for $k$, if $q=7, 19$, or  $23$, then $k$ equals to   $1, 1$, or $2$, respectively.
The existence of $k$
  in Lemma \ref{k} allows us to represent  the quadratic non-residue numbers as follows.
\medskip
\begin{lemma}\label{QNR}  For the $k$ defined in Lemma \ref{k},
$$
\mbox{QNR} = \{(1+k^2)x^2: (q+1)/2\le x\le q-1\},
$$
and $QR = \{x^2: 0\le x\le (q-1)/2\}$ and
\end{lemma}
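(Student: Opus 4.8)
The plan is to reduce both identities to two standard facts about $\mathbb{F}_q^\times$: squaring is exactly two-to-one, and multiplication by a fixed quadratic non-residue carries the residues bijectively onto the non-residues. Throughout I would use that $1+k^2$ is a quadratic non-residue, which is exactly what Lemma \ref{k} provides.

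First I would settle the description of $QR$. Every $x^2$ with $x\neq 0$ is by definition a quadratic residue (and $x=0$ contributes the element $0$), so the right-hand side consists of $0$ together with residues. For exactness I would use $x^2\equiv(q-x)^2\pmod q$ together with the observation that exactly one of $x,q-x$ lies in $\{1,\dots,(q-1)/2\}$; hence every nonzero residue is realized by some $x$ in that range. To see the $(q-1)/2$ values $x^2$, $1\le x\le (q-1)/2$, are distinct, suppose $x^2\equiv x'^2$; then $q\mid (x-x')(x+x')$, and since $0<x+x'\le q-1$ the factor $x+x'$ is not divisible by $q$, forcing $x=x'$. As there are exactly $(q-1)/2$ nonzero residues, this list exhausts them.

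Next, for $\mathrm{QNR}$, I would invoke the multiplicativity of the Legendre symbol. For $x\neq 0$, $\left(\frac{(1+k^2)x^2}{q}\right)=\left(\frac{1+k^2}{q}\right)\left(\frac{x}{q}\right)^2=-1$, because $1+k^2$ is a non-residue by Lemma \ref{k}; thus every $(1+k^2)x^2$ with $x\neq 0$ lies in $\mathrm{QNR}$. Moreover the range $(q+1)/2\le x\le q-1$ is precisely $\{q-y: 1\le y\le (q-1)/2\}$, so by the symmetry $x^2\equiv (q-x)^2$ one gets $\{x^2:(q+1)/2\le x\le q-1\}=\{y^2:1\le y\le (q-1)/2\}=QR\setminus\{0\}$. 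Therefore the proposed set equals $(1+k^2)\cdot(QR\setminus\{0\})$, which is contained in $\mathrm{QNR}$.

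Finally I would upgrade this inclusion to equality by counting. Multiplication by the nonzero element $1+k^2$ is a bijection of $\mathbb{F}_q^\times$, hence injective on the residues, so $(1+k^2)\cdot(QR\setminus\{0\})$ has exactly $(q-1)/2$ elements; since $\#\mathrm{QNR}=(q-1)/2$ as well, the inclusion is forced to be an equality. The only point needing care — and the closest thing to an obstacle — is precisely this last step: one must combine injectivity of multiplication with the equality of cardinalities $\#(QR\setminus\{0\})=\#\mathrm{QNR}$ to deduce surjectivity onto $\mathrm{QNR}$, rather than mere containment. Everything else is bookkeeping with the squaring symmetry $x^2\equiv (q-x)^2$.
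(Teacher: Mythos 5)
Your proof is correct and follows essentially the same route as the paper: multiplicativity of the Legendre symbol shows each $(1+k^2)x^2$ is a non-residue, and a distinctness-plus-counting argument (using $x^2\equiv(q-x)^2$ and $\#\mathrm{QNR}=(q-1)/2$) upgrades the inclusion to equality. The only difference is presentational — you route the count through injectivity of multiplication by $1+k^2$ on $QR\setminus\{0\}$, whereas the paper checks directly that the listed elements are pairwise distinct — and you are somewhat more careful than the paper about the element $0$ in the $QR$ statement.
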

\medskip
\begin{proof}
There is nothing to prove about the
  statement about QR. For the  statement about $QNR$, due to    the multiplicative property of Legendre symbol we have 
$$
\left(\frac{(1+k^2)x^2}{q}\right) = \left(\frac{1+k^2}{q}\right)\left(\frac{x^2}{q}\right) = (-1)(1) = -1
$$
(Recall that Legendre symbol is equivalent to the  Euler\rq{}s criterion  and $(a/q)\equiv a^{\frac{q-1}{2}}=1$ if $a$ is a quadratic residue and $(a/q)=-1$ if $a$ is a quadratic non-residue). Hence, all $(1+k^2)x^2$ are quadratic non-residue. Moreover, they are all distinct since if $(1+k^2)x^2 = (1+k^2)y^2$ and $x\ne y$, then $x=q-y$, which means $x,y$ can't be in $\{(q+1)/2,...,q-1\}$ at the same time. Hence,  the set $\{(1+k^2)x^2: (q+1)/2\le x\le p-1\}$ contains all $(q-1)/2$ quadratic non-residues. This proves the second statement.
\end{proof}
\medskip

The main result of this section follows.

 \begin{theorem}\label{Thm:admissible sets}
 Assume that $q$ is an odd prime congruent to $3$ (mod 4). Then there exists a subset $E$ of $\Bbb F_q^d$ such that  $E$  is  a translational tiling,  has a spectrum and  $E^*$ is a multiplicative tiling in    ${\mathbb F}_q^d$.
 \end{theorem}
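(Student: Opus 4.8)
The plan is to reduce the whole statement, in its main case $d=2$, to the single task of exhibiting a function $f\colon\mathbb F_q\to\mathbb F_q$ whose graph carries all three properties at once. Set $E=\{(x,f(x)):x\in\mathbb F_q\}$ and impose $f(0)=0$. Such a graph contains the origin and tiles $\mathbb F_q^2$ by the translation set $\{0\}\times\mathbb F_q$, since $(x,f(x))+(0,y)$ sweeps out all of $\mathbb F_q^2$; by the graph classification of translational tiles (Theorem \ref{ThIMP15}) this is in fact the only shape a nontrivial tile can take. Because $E$ tiles by translation, Fuglede's theorem in $\mathbb F_q^2$ (Theorem \ref{Fuglede Conj. holds}) hands us a spectrum for free. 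Thus two of the three requirements are automatic, and the entire content of the theorem is to arrange that $E^*$ is a rotational, hence multiplicative, tiling.

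I would then rephrase the rotational-tiling condition as a property of $f$. A rotational tiling must meet each circle $S_r$ with $r\neq0$ in exactly one point, and $E^*$ is the set of $q-1$ points $(x,f(x))$ with $x\neq0$; so the requirement is exactly that
$$g\colon\mathbb F_q^\ast\to\mathbb F_q,\qquad g(x)=x^2+f(x)^2,$$
be a bijection onto $\mathbb F_q^\ast$. Here I would first observe that $g$ cannot vanish on $\mathbb F_q^\ast$: since $q\equiv3\pmod4$ the element $-1$ is a non-residue, so $x^2+f(x)^2=0$ with $x\neq0$ would give $(f(x)/x)^2=-1$, which is impossible. Hence $g$ lands in $\mathbb F_q^\ast$ automatically, and everything reduces to the counting problem of making $g$ assume each nonzero value once.

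The crux, and the only genuine obstacle, is the explicit choice of $f$, for which Lemmas \ref{k} and \ref{QNR} are designed. With $k$ as in Lemma \ref{k}, so that $1+k^2$ is a non-residue, I would set
$$f(x)=\begin{cases}0,&1\le x\le (q-1)/2,\\ kx,&(q+1)/2\le x\le q-1,\\ 0,&x=0.\end{cases}$$
On the first block $g(x)=x^2$, which by Lemma \ref{QNR} runs once through every nonzero quadratic residue; on the second block $g(x)=(1+k^2)x^2$ is a non-residue times a nonzero square, hence a quadratic non-residue, and by Lemma \ref{QNR} these run once through every quadratic non-residue. As the residues and non-residues partition $\mathbb F_q^\ast$, the map $g$ is a bijection onto $\mathbb F_q^\ast$, so $E^*$ meets each circle $S_r$, $r\neq0$, exactly once. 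By Lemma \ref{lem2} the powers of the rotation $R$ generate each such circle, whence $E^*$ is a rotational tiling of $Y=(\mathbb F_q^2)^\ast$ with respect to $\mathcal A=\{R^j:0\le j\le q\}$.

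Assembling the pieces finishes $d=2$: $E$ is a translational tile containing the origin, it has a spectrum by Theorem \ref{Fuglede Conj. holds}, and $E^*$ is a multiplicative tiling. For $d>2$ I would pass to the product $\widetilde E=E\times\mathbb F_q^{d-2}$ together with the block-diagonal rotations $\widetilde R$, exactly as in Theorem \ref{theorem_multiplicative}, using that products of tiles are tiles and that spectra multiply. I expect the subtle point throughout to be checking that the two slope choices, $0$ and $k$, partition $\mathbb F_q^\ast$ with no overlap and no omission; this is precisely the residue/non-residue bookkeeping encoded in Lemma \ref{QNR}, which is why those lemmas were isolated beforehand.
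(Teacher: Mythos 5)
Your proposal is correct and follows essentially the same route as the paper: the same graph $E=\{(x,f(x))\}$ with $f\equiv 0$ on $\{1,\dots,(q-1)/2\}$ and $f(x)=kx$ on $\{(q+1)/2,\dots,q-1\}$, the same use of Lemmas \ref{k} and \ref{QNR} to show $E^*$ meets each $S_r$, $r\neq 0$, exactly once, and the same product construction for $d>2$. Your reformulation via the bijectivity of $g(x)=x^2+f(x)^2$ and the explicit check that $g$ avoids $0$ (since $-1$ is a non-residue) are clean touches, and citing Theorem \ref{Fuglede Conj. holds} for the spectrum is if anything the more accurate reference than the paper's own.
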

\begin{proof}
We first prove the case $d=2$. By Theorem \ref{ThIMP15} and Theorem \ref{theorem_multiplicative},  it is sufficient to construct a set $E$ in ${\mathbb F}_q^2$ for $q\equiv 3$ (mod 4) which is a graph of the form
$$
E= \{(x,f(x)):x\in{\mathbb F}_q\}
$$
for some function  $f:{\mathbb F}_q\rightarrow{\mathbb F}_q$ and $\sharp (E\cap S_r)= 1$ for all $r\in{\mathbb F}_q$.
 For this, we construct $E$ with the following properties:
\begin{itemize}
\item $\vec0= (0,0) \in E$
\item  $(x,0)\in E$ where $0<x\leq \frac{q-1}{2}$
\item  For $k$ in Lemma \ref{k}, let   $(x,kx)\in E$ where $\cfrac{q+1}{2} \leq x< q$. 
\end{itemize}
 Set $E$  is clearly a graph of function  $f:{\mathbb F}_q\rightarrow{\mathbb F}_q$  with  $f(0)=0$, $f(x)= 0$ when    $0<x\leq \frac{q-1}{2}$, and $f(x) = kx$  when   $\frac{q+1}{2} \leq x< q$.
 So, it tiles $\mathbb F_q^2$ by translations with respect to some coordinate system and the tiling partner
 $A= \{(0,t): \ t\in \mathbb F_q\}$, and  by Theorem \ref{ThIMP15} it has a spectrum. 
  We show that $E$  is a rotational tiling by showing that $\sharp(E\cap S_r)=1$, $r\neq 0$. Indeed, if $r$ is a quadratic residue, then there exists unique $x$ satisfying $0<x\leq \frac{q-1}{2}$ such that $x^2+0^2 = r$. If $r$ is a quadratic non-residue, then Lemma \ref{QNR} implies the existence of the unique $x$ satisfying $\frac{q+1}{2}<x\leq q$ such that $x^2+(kx)^2 = (1+k^2)x^2 = r$. Hence, $\sharp(E\cap S_r)=1$. This completes the proof for $d=2$. 

  \medskip

  When $d>2$, we take $E$ in ${\mathbb F}_q^2$ be the set we just constructed above and define $\widetilde{E} = E\times{\mathbb F}_q^{d-2}$. Then $\widetilde E$   is   a multiplicative tiling set on ${\mathbb F}_q^d$ by Theorem \ref{theorem_multiplicative}. Moreover, $\widetilde{E}$ is also a translational tile  with respect to a coordinate system. For example if we choose   ${\bf e}_2 = (0,1,0,\cdots,0)$, the tiling set we obtain is  $\{r{\bf e}_2: \ 0\leq r\leq q-1\}$. 
  To prove that $\widetilde E$ has a spectrum, let $L$ be a spectrum for $E$. A simple calculation shows that $L\times {\mathbb F}_q^{d-2}$ is a spectrum for $\widetilde{E}$, and this completes the proof of the theorem.   
\end{proof}

{\it Remark.} Notice in Theorem \ref{Thm:admissible sets}  the  tiling and spectral sets for $d>2$ are given by $\widetilde{E} = E\times{\mathbb F}_q^{d-2}$. However, other natural candidate is    $\widetilde{E} := E\times E\times \cdots \times E  \times {\mathbb F}_q^k$ where $0\leq k\leq d-2$. Clearly,   the assertions of the  theorem holds for $\widetilde E$.  

\medskip
As a corollary of  Theorem \ref{Thm:admissible sets} and Theorem \ref{PWF} we have the following result. 
 
\begin{corollary} Let  $q$ be an odd prime congruent to $3$ (mod 4) and $d\ge 2$. Let $Y:=  (\Bbb F_q^d)^*$. Then there exists   tight wavelet frame sets  for $PW_Y$. 
\end{corollary}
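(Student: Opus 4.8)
The plan is to obtain the corollary as a direct composition of Theorem \ref{Thm:admissible sets} and Theorem \ref{PWF}; the single point demanding attention is the transpose appearing in the hypotheses of the latter. First I would apply Theorem \ref{Thm:admissible sets} to the given odd prime $q\equiv 3\pmod 4$ and $d\ge 2$, producing a set $E\subseteq \Bbb F_q^d$ that is at once a translational tile (so that, by the same theorem, it carries a spectrum $L$) and whose punctured set $E^*=E\setminus\{0\}$ is a multiplicative tiling. By the explicit construction in that theorem, together with Theorem \ref{theorem_multiplicative}, the governing automorphism set is the cyclic rotation group $\mathcal A=\{\widetilde R^{\,j}:0\le j\le q\}$ generated by the orthogonal matrix $R$ of Lemma \ref{lem2} (padded by the identity on the last $d-2$ coordinates when $d>2$). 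Note also that $0\in E$, so $E^*\neq E$.

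The key observation is that $\mathcal A$ is stable under transposition, which is exactly what is needed to match the hypothesis that ``$E^*$ is multiplicative tiling with respect to $\mathcal A^t$'' in Theorem \ref{PWF}. Indeed, since $a^2+b^2=1$ in $\Bbb F_q$, the matrix $R=\begin{pmatrix} a & -b\\ b & a\end{pmatrix}$ is orthogonal, so $R^t=R^{-1}$; combined with $R^{q+1}=I$ from Lemma \ref{lem2} this gives $R^t=R^{q}\in\mathcal A$, and likewise $\widetilde R^{\,t}=\widetilde R^{-1}\in\mathcal A$ in the higher-dimensional block form. Hence $\mathcal A^t=\mathcal A$, and $E^*$ is a multiplicative tiling with respect to $\mathcal A^t$.

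With these identifications recorded, I would invoke Theorem \ref{PWF} verbatim. Setting $\psi:=(\sharp E)^{-1/2}\check{1_{E^*}}$ and $Y:=(\Bbb F_q^d)^*$, that theorem yields that the wavelet system $\mathcal W=\{\delta_a\tau_l\psi:\ a\in\mathcal A,\ l\in L\}$ is a Parseval frame for $PW_Y$. A Parseval frame is a tight frame (with bound $1$), and since any nonzero scalar multiple of a tight frame is again a tight frame, the un-normalized generator $\check{1_{E^*}}$ also generates a tight wavelet frame for $PW_Y$. By the definition of tight wavelet frame set given in the introduction, $E^*$ is thus a tight wavelet frame set for $PW_Y$, which proves the corollary.

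I do not expect a genuine obstacle, as the statement is essentially bookkeeping on top of the two cited theorems; the only substantive verification is the transpose-stability of the rotation group, and that follows immediately from orthogonality. Two points are worth stating carefully: that the set produced is $E^*$ rather than $E$ (the origin must be removed for the multiplicative tiling to make sense), and that the ambient space is the Paley--Wiener subspace $PW_Y$ rather than all of $L^2(\Bbb F_q^d)$ --- in accordance with Theorem \ref{NO-ON-wavelet-basis}, which rules out such a frame on the entire space.
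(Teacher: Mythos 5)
Your proposal is correct and follows exactly the route the paper intends: the corollary is obtained by feeding the set $E$ (with spectrum $L$ and multiplicatively tiling $E^*$) from Theorem \ref{Thm:admissible sets} into Theorem \ref{PWF}. The one detail you add that the paper leaves implicit --- that the rotation group $\mathcal A$ satisfies $\mathcal A^t=\mathcal A$ because $R^t=R^{-1}=R^{q}$, so the $\mathcal A^t$-hypothesis of Theorem \ref{PWF} is met --- is a worthwhile and accurate clarification, not a deviation.
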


\medskip

Our result settles the existence of wavelet sets when $q$ is an odd prime and $q\equiv3$ (mod 4). The following proposition shows however that our construction method cannot work for $q\equiv1$ (mod 4). The reason is that the circle of radius $0$, $S_0$, contains more than one point.

\medskip
\begin{proposition}
If $q\equiv1$(mod 4), there cannot be wavelet sets obtained by rotational tilings and translations in $\Bbb F_q^2$. 
\end{proposition}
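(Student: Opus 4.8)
The plan is to argue by contradiction through a cardinality and divisibility count, exploiting the fact that when $q\equiv 1\pmod 4$ the isotropic circle $S_0$ carries genuinely nonzero points that rotations can never reach from any other circle. Suppose, for contradiction, that there is a nontrivial set $E\subseteq\mathbb{F}_q^2$ that is simultaneously a translational tile and whose punctured version $E^*=E\setminus\{0\}$ is a rotational (multiplicative) tiling of $Y=(\mathbb{F}_q^2)^*$ by a set $\mathcal{A}$ of rotation matrices. First I would pin down $\sharp E$. Since $E$ tiles $\mathbb{F}_q^2$ by translations, Theorem \ref{ThIMP15} forces $\sharp E\in\{1,q,q^2\}$, and nontriviality ($1<\sharp E<q^2$) leaves only $\sharp E=q$; consequently $\sharp E^*$ is either $q$ (if $0\notin E$) or $q-1$ (if $0\in E$).

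Next I would use the multiplicative tiling to eliminate one case and fix $\sharp\mathcal{A}$. Because each rotation is a bijection, the disjoint union $Y=\bigsqcup_{\alpha\in\mathcal{A}}\alpha(E^*)$ yields $\sharp\mathcal{A}\cdot\sharp E^*=\sharp Y=q^2-1=(q-1)(q+1)$. As $\gcd(q,q^2-1)=1$, the value $\sharp E^*=q$ is impossible, so $0\in E$, $\sharp E^*=q-1$, and therefore $\sharp\mathcal{A}=q+1$.

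The heart of the argument is then the behavior on the isotropic circle. Since $q\equiv 1\pmod 4$, $-1$ is a quadratic residue, so writing $-1=i^2$ in $\mathbb{F}_q$ the equation $x^2+y^2=0$ factors as $(x+iy)(x-iy)=0$; this gives $\sharp S_0=2q-1$ and hence $\sharp(S_0\setminus\{0\})=2(q-1)>0$. Every rotation preserves the quadratic form $x^2+y^2$, hence maps $S_0\setminus\{0\}$ onto itself, and intersecting the tiling $Y=\bigsqcup_{\alpha}\alpha(E^*)$ with $S_0\setminus\{0\}$ partitions this set into the $\sharp\mathcal{A}=q+1$ disjoint blocks $\alpha\bigl(E^*\cap(S_0\setminus\{0\})\bigr)$, each of a common size $m_0$. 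This forces $(q+1)\,m_0=2(q-1)$, i.e. $(q+1)\mid 2(q-1)$. But $2(q-1)=2(q+1)-4$, so $(q+1)\mid 4$, which fails for every prime $q\equiv 1\pmod 4$ (all satisfying $q\ge 5$). This contradiction completes the proof.

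The main obstacle, and the whole point of the $q\equiv 1\pmod 4$ restriction, is the presence of the nonzero isotropic points in $S_0$: in the $q\equiv 3\pmod 4$ construction one has $S_0=\{0\}$, so the count on $S_0$ is vacuous and $\sharp E^*=q-1$ distributes perfectly as one point per nonzero circle, whereas here the $2(q-1)$ points of $S_0\setminus\{0\}$ must be covered by rotations but cannot be, for the divisibility reason above. In writing this up I would ensure that the only structural input used about $\mathcal{A}$ is that its elements preserve $x^2+y^2$, so that the argument applies to any rotational tiling and not merely to the cyclic group $\langle R\rangle$ of Theorem \ref{theorem_multiplicative}.
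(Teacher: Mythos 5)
Your proof is correct, and it rests on the same underlying obstruction as the paper's: when $q\equiv 1\pmod 4$ the isotropic circle $S_0$ acquires $2(q-1)$ nonzero points, and since rotations preserve each $S_r$, these points frustrate any rotational tiling. The execution, however, is different and in fact tighter than the paper's. The paper's proof invokes the structural description of a ``rotational tiling'' carried over from the $q\equiv 3\pmod 4$ construction --- namely that $E^*$ has $q-1$ points meeting each circle of nonzero radius exactly once --- and then observes that $S_0\setminus\{0\}$ is left uncovered; this presupposes properties of the tiling set that are really only justified by the earlier construction. You instead derive everything from the bare definitions: the cardinality count $\sharp\mathcal{A}\cdot\sharp E^*=q^2-1$ together with $\gcd(q,q^2-1)=1$ forces $0\in E$, $\sharp E^*=q-1$ and $\sharp\mathcal{A}=q+1$, and then restricting the tiling to the rotation-invariant set $S_0\setminus\{0\}$ gives $(q+1)\mid 2(q-1)$, hence $(q+1)\mid 4$, which is absurd for $q\ge 5$. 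This buys you a proof valid for \emph{any} set of automorphisms preserving $x^2+y^2$, not just the cyclic group $\langle R\rangle$. One small remark: with $\sharp\mathcal{A}=q+1$ in hand you could reach a contradiction even faster from a circle of nonzero radius, since for $q\equiv 1\pmod 4$ one has $\sharp S_r=q-1$ for $r\ne 0$, and $(q+1)m_r=q-1$ forces $m_r=0$, so those circles cannot be covered either; but your $S_0$ argument is the one that isolates the conceptual difference from the $q\equiv 3\pmod 4$ case, which is presumably why the paper frames it that way too.
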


\begin{proof} Take $Y:=\Bbb F_q^2\setminus \{0\}$. 
If $E$ is a non-trivial set in ${\mathbb F}_q^2$ which tiles  $Y$ by a set of rotations and translations, then $\# E =q$ and $E$ is a graph. Also, $\sharp(E\setminus\{0\}) = q-1$ and it contains exactly one point from each circle. However, there are $q-1$ circles of positive radius. Taking one point from each circle of positive radius would occupy all points in $E\setminus\{0\}$. Taking union of all possible rotations, the rotational tiling   covers only points of non-zero radius. Therefore the points in the circle with  zero radius $S_0$ are not covered. Note that for $q\equiv1$ (mod 4),   $\sharp S_0=2q-1$.  Therefore there  cannot be rotational tiling sets thus wavelet sets obtained by rotation and translation when  $q\equiv1$ (mod 4).
\end{proof}
%
%
%
\section{Open problems} 
%
%
We end this paper with two open   questions. 

{\bf Question $\#1$:} Does there exist tight frame wavelet sets when $q\equiv1$ (mod 4)?

To handle the case $q \equiv 3 (mod 4)$ we exploited the fact that circle of $0$ radius has only one element. We would have to come to grips with such circles to extend our results to the case $q \equiv 1 (mod 4)$. 

{\bf Question $\#2$:} To what extent is it possible to generalize the results of this paper to the case $\Bbb F_{q}^d$, for $q=p^\alpha$,  $\alpha>1$? 

By restricting ourselves to the case where $q$ is prime, congruent to $1 \mod 4$, we limited the impact of arithmetic intricacies on the problem. The situation becomes quite fascinating when $\alpha>1$ due to the existence of subfields. We shall address this issue in a sequel.  \\ 

 
 {\bf Acknowledgement} 
The second author was supported by Minigrant (Grant NO: ST659) of ORSP of San Francisco State University. The authors would like to thank the Graduate Center of the City University of New York where this project was initiated. 

\end{document}